\documentclass[11pt]{article}
\usepackage[T1]{fontenc}
\usepackage[utf8]{inputenc}
\usepackage{amssymb}
\usepackage{amsmath}
\usepackage{enumerate}
\usepackage{amsmath,amssymb}
\usepackage{lmodern}
\usepackage{fourier, heuristica}
\usepackage{mathtools, nccmath}
\usepackage{mathtools}
\usepackage[english]{babel}
\usepackage{amsthm}
\usepackage{systeme}
\usepackage{geometry}[margin=1in]
\usepackage{authblk}
\newtheorem{theorem}{Theorem}[section]

\newtheorem{lemma}[theorem]{Lemma}
\newtheorem{definition}[theorem]{Definition}
\newtheorem{ques}[theorem]{Question}
\newtheorem{prop}[theorem]{Proposition}

\newtheorem*{theorem*}{Theorem}
\newtheorem*{remark}{Remark}
\newtheorem*{mydefinition*}{}
\usepackage{graphicx}
\usepackage{relsize}
\usepackage{tcolorbox}
\usepackage{tikz-cd}
\usepackage[all]{xy}
\usepackage{url}
\numberwithin{equation}{section}
\usepackage{eqparbox}

\usepackage{hyperref, cleveref}

\usepackage{hyperref}
\hypersetup{
    colorlinks=true,
    linkcolor=blue,
    filecolor=magenta,      
    urlcolor=cyan,
    pdftitle={Overleaf Example},
    pdfpagemode=FullScreen,
    }

\makeatletter
\newcommand{\address}[1]{\gdef\@address{#1}}
\newcommand{\email}[1]{\gdef\@email{\url{#1}}}
\newcommand{\@endstuff}{\par\vspace{\baselineskip}\noindent\small
\begin{tabular}{@{}l}\scshape\@address\\\textit{E-mail address:} \@email\end{tabular}}
\AtEndDocument{\@endstuff}
\makeatother
\title{Noncongruence cusp forms, Veech groups and enumeration}
\author{John Rached}
\date{}
\address{Department of Mathematics and Statistics,
Binghamton University. Binghamton, New York, USA}
\email{jabourached@math.binghmaton.edu}

\usepackage{amsmath,calligra,mathrsfs}

\DeclareMathOperator{\Hom}{\mathscr{H}\text{\kern -3pt {\calligra\large om}}\,}

\begin{document}

\maketitle

\begin{abstract}
We prove the existence of weight 2 modular forms for some Veech groups, whose Fourier coefficients at a cusp determine the counts of numbers of branched Galois covers of punctured elliptic curves. We further prove there exist cusp forms, counting ramified genus $2$ covers, that are not invariant under any congruence subgroup of $\mathrm{SL}(2,\mathbb{Z})$. The main results may be viewed as analogs of the construction of an elliptic curve obtained as the quotient of the Jacobian of a classical modular curve, whose point counts modulo primes determine the coefficients of a Hecke eigenform, but where the uniformizing groups may be noncongruence. 
\end{abstract}

\tableofcontents

\section{Introduction}

Each stratum of the space of translation surfaces contains infinitely many surfaces whose Veech groups are $\textit{totally noncongruence}$ \cite{schlagepuchta2018}; in each genus, one can find infinitely many surfaces whose Veech groups are finite-index in $\mathrm{SL}(2,\mathbb{Z})$, but whose projections onto $\mathrm{SL}(2,\mathbb{Z}/N\mathbb{Z})$ are surjective for every $N \in \mathbb{Z}$. On the other hand, even generalizing to the case where Veech groups are non-arithmetic Fuchsian, experimental evidence \cite{kumar2017algebraic} suggests Veech groups have a rich theory of modular forms. In this note, we focus on the noncongruence arithmetic situation, and for a quite special class of cases, show there is some analog for the role of Hecke eigenforms.  \\

We recall that, for congruence modular curves, a classical construction associates to a cuspidal weight 2 Hecke eigenform with $\mathbb{Q}$-coefficients an elliptic curve over $\mathbb{Q}$, such that the coefficients of the eigenform are determined by the point-counts of the reduction of the elliptic curve at good primes. The "inverse" construction, associating to any elliptic curve over $\mathbb{Q}$ a modular form whose coefficients are determined by the point-counts modulo primes \cite[\S 8.8]{diamond2005first}, is the celebrated modularity theorem. We provide a quick review below, to motivate our results.\\

A finite-index subgroup $\Gamma \leq \mathrm{SL}(2,\mathbb{Z})$ is $\textit{congruence}$ if it contains a principal congruence subgroup. This means that there is an integer $N \geq 1$ such that $\Gamma(N)\subseteq \Gamma$ where
\begin{displaymath}
\Gamma(N):= \textrm{ker}(\mathrm{SL}(2,\mathbb{Z}) \mapsto \mathrm{SL}(2,\mathbb{Z}/N\mathbb{Z}))
\end{displaymath}
is the principal congruence subgroup of level $N$. Since $\Gamma \leq \mathrm{SL}(2,\mathbb{Z})$ acts on the hyperbolic upper-half plane $\mathbb{H}$ by M{\"o}bius transformations, we may form the quotient $\mathbb{H}/\Gamma$, which, in general, can be endowed with the structure of a complex dimension one orbifold. There is a subgroup $\Gamma_0(N)$, containing $\Gamma(N)$ as a normal subgroup, such that the coarse scheme of $\overline{\mathbb{H}}/\Gamma_0(N)$ has a model over $\mathbb{Q}$ coming from a moduli interpretation; this is commonly denoted $X_0(N)$. For integers $k \geq 0$, we have $\textit{weight $k$ modular forms}$ on $X_0(N)$; the spaces of sections $H^0(X_0(N),\omega^{\otimes k})$ of a line bundle (due to presence of elliptic points in this case, must be in orbifold sense)  $\omega$ on $X_0(N)$.  From the analytic description of these sections, it is immediate that they satisfy a periodicity condition at cusps, so that for a weight $k$ modular form $f$, there is a $\textit{q-expansion}$ at a cusp of width $h \geq 1$ given by
\begin{displaymath}
f = \sum\limits_{n \geq 1} a(n) q^{n}, \hspace{.2cm} q = e^{2\pi iz/h}
\end{displaymath}
where $z$ is a local coordinate on $\overline{\mathbb{H}}/\Gamma_0(N)$. For good primes $p$, there are natural algebraic correspondences
\begin{displaymath}
X_0(pN) \mapsto X_0(N) \times X_0(N)
\end{displaymath}
so that functorial pull-push constructions (on the associated roof diagram of the correspondence) induce operators on various linear objects associated to $X_0(N)$ on spaces of modular forms of a given weight. The linear operators on the spaces of weight $k$ modular forms are the $\textit{Hecke operators}$ $T_p$. Given a weight $2$ modular form $f$ vanishing at all cusps, that is a simultaneous eigenvector for all Hecke operators, and whose $q$-expansion at a cusp has all coefficients $a(n)$ lying in $\mathbb{Q}$, the fact alluded in the first paragraph of the section says the following: there is an elliptic curve $E_f$ over $\mathbb{Q}$, such that

\begin{displaymath}
a(p) = 1 + p - \#E_f(\mathbb{F}_p).
\end{displaymath}
for primes $p$ not diving the conductor of $E_{f}$. \\

We say a finite index subgroup $\Gamma$ of $\mathrm{SL}(2,\mathbb{Z})$ is $\textit{noncongruence}$ if there is no integer $N \geq 0$ such that $\Gamma$ contains a principal congruence subgroup $\Gamma(N)$ as defined above (as mentioned above, it is totally noncongruence if the stronger condition holds, that the subgroup is proper and all canonical projections are surjective. Equivalently, it is a proper subgroup dense in the congruence completion $\mathrm{SL}(2,\hat{\mathbb{Z}})$). Despite the fact finite index subgroups of $\mathrm{SL}(2,\mathbb{Z})$ have $\mathrm{SL}(2,\mathbb{Q})$ as their commensurator in $\mathrm{SL}(2,\mathbb{R})$, which allows one to define Hecke operators on noncongruence $\Gamma$ just the same as for congruence subgroups, Serre-Burger prove this Hecke action is trivial on (genuine) noncongruence modular forms \cite{Berger1994}. One important consequence of the congruence theory of Hecke operators is that simultaneous eigenvectors whose Fourier expansions have $\mathbb{Q}$-coefficients, in fact have coefficients with \textit{bounded denominators} \cite[Theorem 3.5.2]{shimura1971arithmetic}; the expansions have integer coefficients, up to normalization. Breakthrough work of Calegari-Dimitrov-Tang \cite[Theorem 1.0.1]{CalegariDimitrovTang_UnboundedDenominators} confirms the conjecture of Atkin Swinnerton-Dyer that the bounded denominators property actually characterizes (genuine) noncongruence modular forms. In special settings, we show there yet exist weight 2 modular forms on potentially noncongruence modular curves whose $q$-expansion coefficients "count $\mathbb{F}_p$-points on a moduli space". \\

Let $G$ be a finite group, and let $\mathcal{M}(G)$ be the moduli stack of $G$-structures on elliptic curves over $\mathbb{Z}[1/|G|]$, constructed by Chen \cite{chen2018moduli} (Section \ref{modulistacks}). This space essentially \footnote{This is literally true when the center of $G$ is trivial; however, in general, $\mathbb{Q}$-points of $\mathcal{M}(G)$ for example, may not arise from $G$-Galois covers} parametrizes $G$-Galois covers of elliptic curves with ramification only possible over the origin. The connected components of the underlying coarse scheme of $\mathcal{M}(G)_{\mathbb{C}}$ are modular curves (Section \ref{TM_section}), for modular group a finite-index, possibly $\textit{totally noncongruence}$ subgroup of $\mathrm{SL}(2,\mathbb{Z})$. Further, for a cover $\pi:C \rightarrow E$ of an elliptic curve by a surface, with ramification only possibly over the origin $O$, if the unramified cover $C^{\circ} \rightarrow E^{\circ}$ where $E^{\circ} := E - \left\{O\right\}$ and $C^{\circ}:= C - \left\{\pi^{-1}(O)\right\}$ is Galois, the coarse scheme of the associated Teichm{\"u}ller curve (viewed as an orbifold quotient, see Appendix \ref{Appendix_A}) can be identified with the connected component $\mathcal{H}(\pi)_{\mathbb{C}}$ of the coarse scheme of $\mathcal{M}(G)_{\mathbb{C}}$, containing $\pi$ (Section \ref{TM_section}). Also, (Proposition \ref{compactification}) there is a containment $\mathcal{M}(G) \subset \overline{\mathcal{M}(G)}$ as a dense open substack of a smooth, proper Deligne-Mumford stack over $\mathbb{Z}[1/|G|]$. Let $\overline{\mathcal{H}}(\pi)$ denote the closure of $\mathcal{H}(\pi)$ in $\overline{\mathcal{M}(G)}$. As is conventional, we use subscripts to denote base changes, e.g. $\mathcal{M}(G)_{\mathbb{C}}$ for base change to $\mathbb{C}$. The following is our first main result.

\begin{theorem}
\label{main}
Let $\pi:C^{\circ} \rightarrow E^{\circ}$ be a degree $d$, unramified cover of an elliptic curve $E$ with origin removed. Then, the compactification of the coarse scheme of the modular curve associated to the Veech group of its Galois closure $\pi':C' \rightarrow E^{\circ}$ can be identified with a compactified component $\overline{\mathcal{H}}(\pi')_{\mathbb{C}}$ of the underlying coarse scheme of $\mathcal{M}(G)_{\mathbb{C}}$ where $G$ is a subgroup of the symmetric group $S_d$. If $\overline{\mathcal{H}}(\pi')$ is defined over $\mathbb{Q}$ and $p \nmid d!$, $\overline{\mathcal{H}}(\pi')$ has good reduction at $p$. If, in addition, $\overline{\mathcal{H}}(\pi')$ is of genus $1$, and has a $\mathbb{Q}$-rational point at a cusp $c$, there exists a weight 2 modular form $f$, for the Veech group, satisfying the following: the Fourier coefficients $a(n)$ of $f$ in its $q$-expansion at the cusp at $c$
\begin{displaymath}
f(q) = \sum_{n \geq 1}a(n)q^n,
\end{displaymath}
determine the cardinality of the \ $\mathbb{F}_p$-points, $\#\overline{\mathcal{H}}(\pi')(\mathbb{F}_p)$.
\label{main_theorem}
\end{theorem}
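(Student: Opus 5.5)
The proof splits into three parts, matching the three assertions of the theorem.

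\emph{Identification.} Let $\pi':C'\to E^{\circ}$ be the Galois closure, with Galois group $G$ equal to the monodromy group of $\pi$. Since $G$ is the image of $\pi_1(E^{\circ})$ acting on the $d$ sheets, it embeds in $S_d$. By the discussion of Section~\ref{TM_section}, the Teichm\"uller curve of $C'$ (viewed as an orbifold quotient, Appendix~\ref{Appendix_A}) has coarse space equal to the component $\mathcal{H}(\pi')_{\mathbb{C}}$ of the coarse scheme of $\mathcal{M}(G)_{\mathbb{C}}$ containing $\pi'$. Concretely, this component is $\mathbb{H}/\Gamma$, where $\Gamma$ is the stabilizer of the Nielsen class of $\pi'$ under the action of $\mathrm{Out}^+(F_2)\cong \mathrm{SL}(2,\mathbb{Z})$ on surjections $F_2\to G$ modulo $\mathrm{Aut}(G)$. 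That stabilizer is the Veech group of $C'$. Taking closures in $\overline{\mathcal{M}(G)}$ (Proposition~\ref{compactification}) gives the identification of compactifications, since both are the unique smooth compactification of the same affine curve.

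\emph{Good reduction.} By Proposition~\ref{compactification}, $\overline{\mathcal{M}(G)}$ is smooth and proper over $\mathbb{Z}[1/|G|]$, and $|G|$ divides $d!$. Hence for $p\nmid d!$, the stack is smooth and proper over $\mathbb{Z}_{(p)}$. If $\overline{\mathcal{H}}(\pi')$ is defined over $\mathbb{Q}$, I take its closure inside $\overline{\mathcal{M}(G)}_{\mathbb{Z}_{(p)}}$. The key claim is that this closure is a union of connected components of the coarse space, not just a closed subscheme. The argument is that connected components of a smooth proper stack over a DVR have geometrically connected fibres in a compatible way (Stein factorization / Zariski connectedness). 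The coarse space is then a proper flat curve whose special fibre is the coarse space of a smooth tame DM curve stack, because $p\nmid |G|$ makes the automorphism groups tame. Such a coarse space is smooth, which gives good reduction at $p$.

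\emph{Modular form.} Assume the genus is $1$ and there is a rational cusp $c$. Then $X:=\overline{\mathcal{H}}(\pi')$ is an elliptic curve over $\mathbb{Q}$ with origin $c$, with good reduction away from $d!$. By the modularity theorem there is a newform $g$ of weight $2$ on $\Gamma_0(N)$, where $N$ is the conductor, with $a_g(p)=p+1-\#X(\mathbb{F}_p)$ for $p\nmid N$.

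To get a form for the Veech group $\Gamma$ itself, I use instead the invariant differential on $X$. Pulled back along the uniformization $\mathbb{H}\to \mathbb{H}/\Gamma\to X$, the holomorphic differential $\omega$ on the genus-one curve becomes $f(z)\,dz$, with $f$ a weight $2$ cusp form for $\Gamma$. Expanding at $c$ with $q=e^{2\pi i z/h}$ gives $f=\sum a(n)q^n$. The step I expect to be the main obstacle is linking these coefficients to point counts when $\Gamma$ is noncongruence, since there are no Hecke operators available. My approach is to use the formal group. The local parameter $q$ at the rational cusp is defined over $\mathbb{Q}$ (up to scaling), using the Tate-curve / Deligne–Rapoport structure at the boundary of $\overline{\mathcal{M}(G)}$, and it is integral at $p\nmid d!$. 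In that parameter, $\omega=\sum a(n)q^{n-1}dq$ is the expansion of the invariant differential of $X$ with respect to a formal parameter. Atkin–Swinnerton-Dyer/Honda congruences then give
\[ a(np)-A_p\,a(n)+p\,a(n/p)\equiv 0 \pmod{p^{\,1+\mathrm{ord}_p n}},\qquad A_p=p+1-\#X(\mathbb{F}_p). \]
In particular $A_p$ is determined by the coefficients, for example via $a(p)\equiv A_p a(1)\pmod p$ together with the Hasse bound $|A_p|<2\sqrt p$ and the higher congruences. This determines $\#\overline{\mathcal{H}}(\pi')(\mathbb{F}_p)$.

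The delicate inputs in this last step are two: the rationality and integrality of the cusp parameter, and the existence of an isomorphism between the formal group of $X$ at $c$ and a height-one or height-two formal group law over $\mathbb{Z}_p$. Both should follow from good reduction at $p$ and the smoothness of $\overline{\mathcal{M}(G)}$ along the boundary.
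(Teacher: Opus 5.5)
Your identification and good-reduction steps are the paper's. The monodromy group of $\pi$ embeds in $S_d$ and is the Galois group of the closure. Lemma~\ref{stackequivalence} identifies the Teichm\"uller curve with a component of $\mathcal{M}(G)_{\mathbb{C}}$. Good reduction comes from $\overline{\mathcal{M}(G)}$ being smooth and proper over $\mathbb{Z}[1/|G|]$ with $|G|\mid d!$; you supply the component and tame-coarse-space details that the paper compresses into one line.

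One slip in the identification: components of $\mathcal{M}(G)_{\mathbb{C}}$ correspond to surjections $F_2\to G$ modulo $\mathrm{Inn}(G)$, not $\mathrm{Aut}(G)$. So the uniformizing group is the $G$-equivariant Veech group $\Gamma(C',(\pi')^{*}\mu)_G$ of Lemma~\ref{stackequivalence}. Quotienting by $\mathrm{Aut}(G)$ gives the full Veech group, but then you get a component of $\mathcal{M}(G)/\mathrm{Out}(G)$. For the multiplication-by-$N$ cover of a torus, the Veech group is $\mathrm{SL}(2,\mathbb{Z})$ while the component is $\mathbb{H}/\Gamma(N)$. So ``the Veech group'' in the statement must be read as the equivariant one.

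The genuine difference is how you extract $A_p=p+1-\#\overline{\mathcal{H}}(\pi')(\mathbb{F}_p)$ from the Atkin--Swinnerton-Dyer congruences.
\begin{itemize}
\item \emph{The paper's route.} It uses Selberg's bound $|a(m)|\le Cm^{4/5}$ to get $|a(np^r)-A_pa(np^{r-1})+pa(np^{r-2})|<p^r$ for $r\ge R$. The congruence modulo $p^r$ then becomes an exact identity, and $A_p$ is a ratio of coefficients.
\item \emph{Your route.} You stay $p$-adic. With $a(1)$ a $p$-adic unit, $a(p)\equiv A_pa(1)\pmod p$ gives $A_p$ modulo $p$.
\begin{itemize}
\item If $A_p\not\equiv 0$, then $a(p)$ is a unit, and $a(p^2)-A_pa(p)+pa(1)\equiv 0\pmod{p^2}$ gives $A_p$ modulo $p^2>4\sqrt p$.
\item If $A_p\equiv 0$, then $|A_p|\le 2\sqrt p<p$ forces $A_p=0$ for $p\ge 5$.
\end{itemize}
The restriction $p\ge 5$ costs nothing: it is automatic for $p\nmid d!$ once $d\ge 3$, and for $d\le 2$ every component has genus $0$.
\end{itemize}

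What each buys:
\begin{itemize}
\item Your argument needs no archimedean information about the $a(n)$. It uses only their $p$-integrality in the algebraic normalization of the cusp parameter, and only $a(1),a(p),a(p^2)$.
\item The paper's argument, when it applies, gives the stronger conclusion of an exact three-term recursion along $np^r$ for large $r$.
\item However, the paper's step ``divisible by $p^r$ and of absolute value $<p^r$, hence zero'' tacitly requires that three-term combination to be an honest integer. It also requires Selberg's bound to hold in the same normalization of $q$ as the congruences. Both are delicate for genuinely noncongruence forms, whose coefficients have unbounded denominators. Your argument sidesteps this.
\end{itemize}
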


We also show that, for a moduli interpretation for Veech group quotients over $\mathbb{Q}$, closely related but different to that of a connected component of $\mathcal{M}(G)$, we can guarantee the modular form $f$ determining the point-counts is $\textit{not}$ a Hecke eigenform, and is in fact not invariant under $\textit{any congruence subgroup}$. For a nonsquare discriminant $D$, the Hilbert modular surface $X_D$ parametrizes principally polarized abelian surfaces equipped with real multiplication by the real quadratic order $\mathcal O_D$. The case of square discriminant $D=d^2$ is different: the associated quadratic algebra is split, with $\mathcal O_{d^2}\otimes_{\mathbb Z}\mathbb Q \cong \mathbb Q\times\mathbb Q$, rather than a real quadratic field. In this case, $X_{d^2}$ admits a birational moduli interpretation \cite[Section 4]{McMullen2007Dynamics}, \cite[Section 4]{Kani1994} in terms of primitive degree-$d$ maps $\phi:Y\to E$, where $Y$ is a smooth genus-two curve and $E$ is an elliptic curve. The induced map $\phi_*:\operatorname{Jac}(Y)\to E$ has connected kernel an elliptic curve, giving two elliptic curves naturally associated to the cover. Thus one obtains a map from $X_{d^2}$ to $\mathcal M_1\times\mathcal M_1$, recording these two elliptic curves. The locus $W_{d^2}$  where a ramification point of the degree-$d$ map is a Weierstrass point of $Y$ (or, equivalently, the locus of genus-two eigenforms with a double zero) is comprised of {Teichm\"{u}ller curves} (see Section \ref{Wohlfahrt}) as their irreducible components. For a primitive degree-$d$ map $f:Y\to E$ with $g(Y)=2$ and $g(E)=1$, Riemann–Hurwitz gives $2g(Y)-2=d(2g(E)-2)+\sum_{y\in Y}(e_y-1)$, so $2=\sum_{y\in Y}(e_y-1)$. Thus the cover has total ramification $2$, realized by two simple ramification points. The condition defining $W_{d^2}$ is then that one of these ramification points is a Weierstrass point of $Y$. The members of the family $W_{d^2}$ are referred to as $\textit{Weierstrass curves}$. The Fuchsian groups (Veech groups) uniformizing components of $W_{d^2}$ are noncongruence, finite index in $\mathrm{SL}(2,\mathbb{Z})$.
Mukamel \cite{mukamel2021polynomials} studies the image of $W_{d^2}$ under the map to the two $j$-invariants and obtains an explicit polynomial relation over $\mathbb{Z}$ whose zero locus is birational to $W_{d^2}$. We show

\begin{theorem}
\label{maintheorem2}
There exists a noncongruence cusp form $f$, invariant a Veech group for a Weierstrass curve, and not invariant under any congruence subgroup of $\mathrm{SL}(2,\mathbb{Z})$ such that its Fourier expansion at a $\mathbb{Q}$-rational cusp $c$
\begin{displaymath}
f(q) = \sum\limits_{n \geq 1}a(n) q^n
\end{displaymath}
determine, for all but finitely many primes $p$, the number of primitive, degree-$d$, genus $2$ covers of elliptic curves $\phi:Y \rightarrow E$ over $\mathbb{F}_p$ with two simple ramification points (up to an additive error independent of $p$).
\end{theorem}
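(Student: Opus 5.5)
The plan is to exhibit one explicit Weierstrass curve component whose compactification has genus $1$ and a $\mathbb{Q}$-rational cusp, and then run the same Eichler--Shimura-type argument as in Theorem \ref{main}.

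\textbf{Choosing the component.} Using Mukamel's integral polynomial model of $W_{d^2}$ \cite{mukamel2021polynomials}, I would pick a small $d$ (e.g. $d=3$ or $d=4$) for which a component $W$ of $W_{d^2}$ satisfies:
\begin{itemize}
\item it is defined over $\mathbb{Q}$;
\item its smooth compactification $\overline{W}$ has genus $1$;
\item it has a cusp $c$ that is a $\mathbb{Q}$-point.
\end{itemize}
The genus can be computed from the Veech group (cusps, elliptic points and index, via the Riemann--Hurwitz formula for $\overline{W}\to\overline{\mathbb{H}/\mathrm{SL}(2,\mathbb{Z})}$) or directly from the plane model. The cusp rationality can be read off from the degenerations of the polynomial relation at boundary points of $\mathcal{M}_1\times\mathcal{M}_1$. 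Then $(\overline{W},c)$ is an elliptic curve $E_W/\mathbb{Q}$.

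\textbf{Constructing $f$.} By the modularity theorem, $E_W$ has a newform $g\in S_2(\Gamma_0(N))$ with $a_g(p)=p+1-\#E_W(\mathbb{F}_p)$ for $p\nmid N$. I would pull back the Néron differential of $E_W$ along the uniformization $\mathbb{H}\to W(\mathbb{C})$. Since $\overline{W}$ has genus one, its holomorphic $1$-form is unique up to scaling, and we get $\omega=2\pi i\, f(z)\,dz$. Here $f$ is a weight $2$ cusp form for the Veech group $\Gamma_W$ (holomorphic forms vanish at cusps as differentials), normalized so its $q$-expansion at $c$ is rational.

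The key arithmetic input is the comparison with $g$. The differential is also $\phi^*\omega_g$ for the modular parametrization $X_0(N)\to E_W$, but $f$ and $g$ live on different uniformizations. Instead I would argue as in the proof of Theorem \ref{main}: the formal group of $E_W$ at the origin $c$, expressed in the cusp parameter $q$, is integral away from finitely many primes. By Honda's theory/Atkin--Swinnerton-Dyer congruences, its coefficients $a(n)$ then satisfy $a(np)-a_g(p)a(n)+p\,a(n/p)\equiv 0$ modulo powers of $p$. In particular $a_g(p)$, and hence $\#\overline{W}(\mathbb{F}_p)$, is determined by the $a(n)$ for $p$ of good reduction. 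This holds for all but finitely many $p$, by good reduction of the Mukamel model for $p\nmid$ some explicit integer.

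\textbf{Point counts and covers.} The open part $W$ is, over $\mathbb{F}_p$ for $p$ large, a coarse moduli space for primitive degree-$d$ genus-$2$ covers with the Weierstrass ramification condition. The birational moduli interpretation \cite{McMullen2007Dynamics,Kani1994} spreads out over $\mathbb{Z}[1/M]$. Thus the count of such covers over $\mathbb{F}_p$ differs from $\#\overline{W}(\mathbb{F}_p)$ by three bounded contributions:
\begin{itemize}
\item the cusps, finitely many and bounded;
\item the indeterminacy loci of the birational map, of bounded size;
\item twists and automorphisms, where counting with weights $1/|\mathrm{Aut}|$ makes the groupoid count equal the coarse count up to bounded elliptic-point corrections.
\end{itemize}
This gives the ``additive error independent of $p$''.

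\textbf{Noncongruence.} This is the step I expect to be the main obstacle. I must show $f$ is invariant under no congruence subgroup. Suppose it were invariant under some $\Gamma(N)$. Then $f$ is a cusp form for the congruence group $\Gamma_W\Gamma(N)$, or at least for the stabilizer of $f$, which contains $\Gamma(N)$ and hence is congruence. Since $\Gamma_W$ is known to be noncongruence for Weierstrass curves (Section \ref{Wohlfahrt}, e.g. by Wohlfahrt's level criterion), the stabilizer of $f$ would strictly contain $\Gamma_W$. So $\omega$ would descend to a quotient curve of lower degree over $X(1)$ dominated by $\overline{W}$.

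I would rule this out in one of two ways. The first is to check directly that $\Gamma_W$ is maximal among groups whose quotient has positive genus. The cleaner alternative is the unbounded-denominators criterion: a form invariant under a congruence group with rational coefficients has bounded denominators. So it suffices to verify that $a(n)$ has unbounded denominators at some prime, which follows by Calegari--Dimitrov--Tang \cite{CalegariDimitrovTang_UnboundedDenominators} once we know the minimal invariance group of $f$ is noncongruence. The expected difficulty is precisely pinning down that minimal group, so I would compute the cusp widths of $\Gamma_W$ and apply Wohlfahrt's criterion to every intermediate group between $\Gamma_W$ and $\mathrm{SL}(2,\mathbb{Z})$ on which $f$ could descend.
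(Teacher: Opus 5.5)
Your overall architecture matches the paper's: a genus-one Weierstrass curve with a $\mathbb{Q}$-rational cusp, the pulled-back differential, and ASD congruences. The noncongruence step, however, is left open, which is a genuine gap, and the routes you propose aim at the wrong target.

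\begin{itemize}
\item You do not need the full stabilizer of $f$.
\item You do not need maximality of $\Gamma_W$ among positive-genus overgroups, which is far stronger than required and which you have no means to verify.
\item The unbounded-denominators route is circular, as you yourself note.
\end{itemize}

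The missing observation is this. Suppose $f\neq 0$ were invariant under a congruence subgroup $\Gamma'$. Then $f$ is a cusp form for $\langle\Gamma_W,\Gamma'\rangle$. That group is congruence and contains $\Gamma_W$, so it contains the congruence closure $\Gamma_W^{c}$. Hence $f\in S_2(\Gamma_W^{c})$, and it suffices to know that $X_{\Gamma_W^{c}}$ has genus $0$. Intermediate noncongruence groups are irrelevant.

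The paper obtains this from Weitze-Schmith\"usen's theorem (Theorem~\ref{Wohlfahrt}, Lemma~\ref{congruence_closure}): for every Veech group in $H(2)$, the congruence closure is $\mathrm{SL}(2,\mathbb{Z})$ or a subgroup of index $3$. $\mathrm{SL}(2,\mathbb{Z})$ and all its index-$3$ subgroups have genus-$0$ quotients. So no nonzero weight-$2$ cusp form on any Weierstrass curve is invariant under a congruence subgroup (Lemma~\ref{existence_noncongruence}).

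Your fallback via cusp widths and Wohlfahrt's criterion could reach the same conclusion for one explicit curve. Every congruence overgroup contains $\Gamma_W\Gamma(\ell)$, with $\ell$ the Wohlfahrt level, and $\Gamma_W\Gamma(\ell)$ is itself congruence, so it is $\Gamma_W^{c}$. You would still have to compute that this group has genus $0$, and the proposal does not do so.

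Second, the statement is existential, and you never produce a curve meeting your three requirements. Your suggested $d=3,4$ fail. The Veech groups of $W_9$ and $W_{16}$ have index $3$ and $9$; the latter has three cusps, of widths $4,3,2$. So both curves have genus $0$ and carry no nonzero weight-$2$ cusp forms at all. The paper takes $d=8$: Mukamel proves that $\overline{W}_{64}$ is birational to $y^2=x^3-63x+162$. Proposition~\ref{rational_point} then supplies a $\mathbb{Q}$-rational cusp by a Galois-invariance argument over the rational point $(\infty,\infty)$ of the $(j_1,j_2)$-model.

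A smaller point: a congruence modulo $p$ does not by itself determine $A_p=p+1-\#\overline{W}(\mathbb{F}_p)$; you need an archimedean bound. Since the holomorphic differential does not vanish at $c$, $a(1)\neq 0$, so $a(1)$ is a $p$-adic unit for almost all $p$. Then $a(p)\equiv A_p\,a(1)\pmod p$, together with the Hasse bound $|A_p|\le 2\sqrt p$, determines $A_p$ once $p>16$. This is simpler than the paper's use of Selberg's bound, but it has to be stated.
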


In fact, this is the curve $W_{64}$ and Mukamel \cite[Theorem 3.1]{mukamel2021polynomials} has determined an explicit elliptic curve
\begin{displaymath}
y^2 = x^3 - 63x + 162
\end{displaymath}
birational to $W_{64}$.
\begin{remark}
When center of $G$, $Z(G)$, is trivial, Theorem \ref{main_theorem} is an enumeration statement. Specifically, for $p \nmid d!$, since the coefficients $a(n)$ determine $\#\overline{\mathcal{H}}(\pi')(\mathbb{F}_p)$, this means $a(n)$ determine $\#\mathcal{H}(\pi')(\mathbb{F}_p)$ up to an additive error, independent of $p$, coming from the boundary contributions $\partial \mathcal{H}(\pi') = \overline{\mathcal{H}}(\pi')\setminus \mathcal{H}(\pi') \subset \overline{\mathcal{M}(G)}$.
\end{remark}

\subsection{Overview of argument}

First, we realize the Teichm{\"u}ller curve associated to the Galois closure of $\pi:C^{\circ} \rightarrow E^{\circ}$ as a connected component of $\mathcal{M}(G)_{\mathbb{C}}$. This is already sketched in \cite[Section 4.2]{Chen2025NoncongruenceHurwitz}. In particular, we show in Appendix \ref{Appendix_A} that Teichm{\"u}ller uniformization affords an equivalence of categories (commuting with the projection functors) from the the orbifold quotient of the hyperbolic plane modulo the Veech group, and the analytification of a connected component of $\mathcal{M}(G)_{\mathbb{C}}$. \\

Upon making this identification, and to prove Theorem \ref{main_theorem}, it suffices to show if $\overline{\mathcal{H}}(\pi')_{\mathbb{C}}$ is the compactification of a genus $1$ connected component of the underlying coarse scheme of $\mathcal{M}(G)_{\mathbb{C}}$, defined over $\mathbb{Q}$, and containing a $\mathbb{Q}$-rational point at cusp, then the conclusion of Theorem \ref{main} holds. This is because the compactification of $\overline{\mathcal{M}(G)}$ is smooth over $\mathbb{Z}[1/|G|]$ and thus has good reduction at $p \nmid |G|$. For this, we will use Selberg's bound on the Fourier coefficients of modular forms, valid for \textit{any} finite-index subgroup of $\mathrm{SL}(2,\mathbb{Z})$ \cite{selberg1965}, combined with a three-term $p$-adic recursion relation for Fourier coefficients of holomorphic differentials on elliptic curves (\cite{katz1981crystalline}, \cite{Scholl1985}, \cite{Kibelbek2014ASD}, \cite{atkin1971modular}), and a simple Hasse bound on $\mathbb{F}_p$-points on elliptic curves. Similar arguments, where these recursion relations and Fourier bounds are both leveraged, appear for closely related purposes to ours, for example, \cite[Section 3]{li2012fourier}. \\

To obtain Theorem \ref{maintheorem2}, the same strategy of Selberg bound, recursion relation and Hasse bound works, but we use a dimension count (Lemma \ref{existence_noncongruence}) on the complex dimension of spaces of cusp forms to show the candidate $f$ we find whose Fourier expansion enumerates ramified covers, cannot be invariant under a congruence subgroup. We also show (Proposition \ref{rational_point}) that Mukamel's cusp classification \cite{mukamel2021polynomials} guarantees the existence of $\mathbb{Q}$-rational cusps on the curve $W_{64}$ we study.

\section{Preliminaries}
\label{Veech}
\subsection{Square-tiled surfaces}
There are an abundance of excellent references on foundational material for translation structures and square-tiled surfaces; we will follow \cite{schmithusen2004}.

Let $X$ be a compact Riemann surface, with possibly finitely many points removed. A \textit{translation structure} $\mu$ on $X$ is an atlas such that all transition functions are translations. The pair $X_{\mu}:=(X,\mu)$ is a $\textit{translation surface}$. The group of all orientation-preserving diffeomorphisms of $(X,\mu)$ will be denoted by $\textrm{Aff}^{+}(X_{\mu})$. Lifting the translation structure $X_{\mu}$ to a translation structure $\tilde{X}_{\mu}$ on the topological universal cover of $X$, charts on $\tilde{X}_{\mu}$ define natural holomorphic maps into $\mathbb{C}$. There is a natural group homomorphism
\begin{displaymath}
\mathbf{aff}:\mathrm{Aff}^{+}(\tilde{X}_{\mu}) \rightarrow \mathrm{Aff}^{+}(\mathbb{C})
\end{displaymath}
obtained by specifying the unique affine diffeomorphism of $\mathbb{C}$ compatible with the holomorphic chart maps. Observe that $\mathrm{Aff}^{+}(\mathbb{C}) \cong \mathbb{C} \rtimes \mathrm{GL}^{+}(2,\mathbb{R})$, so there is a projection $\mathrm{proj} : \mathrm{Aff}^{+}(\mathbb{C}) \rightarrow \mathrm{GL}^{+}(2,\mathbb{R})$, and in fact
a well-defined group homomorphism
\begin{displaymath}
\mathbf{der}: \mathrm{Aff}^{+}(X_{\mu}) \rightarrow \mathrm{GL}^{+}(2,\mathbb{R}), \ f \mapsto \mathrm{proj}(\mathbf{aff}(\hat{f}))
\end{displaymath}
where $\hat{f}$ is some lift of $f$ to $\tilde{X}_{\mu}$. The group $\Gamma(X_{\mu}) := \mathbf{der}(\mathrm{Aff}^{+}(X_{\mu})) \subseteq \mathrm{SL}(2,\mathbb{R})$ is the $\textit{Veech}$ group of $X_\mu$ \cite[Section 2.1]{schmithusen2004}.\\

Let $E_0 := \mathbb{C}/\langle1,i\rangle$ be the standard complex elliptic curve (torus) equipped with a translation structure $\mu$ obtained from a non-zero holomorphic 1-form $\omega$ on $E_0$. A \textit{square-tiled surface} is a pair $(C^{\circ},\pi^{*}\mu)$, where $C$ is a surface, $\pi:C \rightarrow E_0$ is a branched cover with ramification only over the origin $O \in E_0$, and $\pi^{*}\mu$ is the pullback of the translation structure on $E_0$ to $C^{\circ}:= C - \pi^{-1}(O)$. Orientation-preserving affine diffeomorphisms of $C^{\circ}$ descend to orientation-preserving diffeomorphisms of $E_0$ with its origin removed \cite[Proposition 2.6]{schmithusen2004}. Explicitly, \cite[Corollary 2.7]{schmithusen2004} we have
\begin{displaymath}
\Gamma(C^{\circ}, \pi^*\mu) = \left\{A \in \mathrm{SL}(2,\mathbb{Z}) \ | A = \mathrm{\mathbf{der}}(\hat{f}) \ \textrm{for some} \ \hat{f} \in \mathrm{Aff}^{+}(\mathbb{H}) \ \textrm{descending to} \ X \ \textrm{via} \ u \right\},
\end{displaymath}
and further, $\Gamma(C^{\circ}, \pi^*\mu)$ is finite-index in $\mathrm{SL}(2,\mathbb{Z})$. When, $\pi: C^{\circ} \rightarrow E^{\circ}$ is a $G$-torsor, we will denote by $\Gamma(C^{\circ},\pi^{*}\mu)_G < \Gamma(C^{\circ},\pi^{*}\mu)$ the subgroup of the Veech group of $(C^{\circ},\pi^{*}\mu)$ consisting of $G$-equivariant affine diffeomorphisms of $(C^{\circ},\pi^{*}\mu)$.



\subsection{Galois closures}

In this note, we will often pass to Galois closures of  covers obtained by removing branch loci of covers $\pi:C \rightarrow E$, where $E$ is some complex elliptic curve, $C$ is a Riemann surface, and $\pi$ is branched only possibly at the origin $O$. In particular, we will be interested in the unramified cover $C^{\circ} \rightarrow E^{\circ}$ obtained by removing the origin $E^{\circ} - \left\{O\right\}$ and divisor in the pre-image of the origin, i.e., $C^{\circ}:= C - \left\{\pi^{-1}(O)\right\}$ and its Galois closure. We review standard material below, what follows is expository.

\begin{definition}[G-torsors]
\label{G-torsor}
Let $X$ and $Y$ be connected schemes, and $G$ a finite group. A $G$-torsor is a finite \'{e}tale morphism $X \rightarrow Y$, along with an isomorphism $G \overset{\sim}{\rightarrow} \textrm{Gal}(k(X)/k(Y))$ from $G$ to the Galois group of the extension of function fields.
\end{definition}

Let $k$ be an algebraically closed subfield of $\mathbb{C}$ and $X$, $Y$ connected, smooth, projective schemes over $k$ of pure dimension $1$. Fix a $k$-point $y_0 \in Y(k)$. By Riemann's existence theorem \cite[Expos\'{e} XII and Expos\'{e} XIII]{Grothendieck2003sga1}, for any finite group $G$ there is a canonical one-to-one correspondence between $G$-torsors (in Definition \ref{G-torsor}, $G$-torsors are connected) $\pi: X \rightarrow Y$ equipped with a choice of lift $x_0 \in \pi^{-1}(y_0)$ and epimorphisms $\pi_1(Y(\mathbb{C}),y_0) \twoheadrightarrow G$. We also recall that changing the choice of lift $x_0$ corresponds to composing the map $\pi_1(Y(\mathbb{C}),y_0) \rightarrow G$ with an inner automorphism of $G$. \\

For $Y$ as in the previous paragraph, let $y_0 \in Y(\mathbb{C})$ be a $\mathbb{C}$-point. We recall that the \textit{monodromy group} $M$ of an \'{e}tale cover $\pi: X \rightarrow Y$ (with induced cover $\overline{\pi}:X(\mathbb{C}) \rightarrow Y(\mathbb{C}))$ is the minimal quotient of $\pi_1(Y(\mathbb{C}),y_0)$ through which the monodromy action of this group on the fiber $\overline{\pi}^{-1}(y_0)$ factors. \footnote{Alternatively, upon identifying $\overline{\pi}^{-1}(y_0)$ with the symmetric group $S_n$, the monodromy group $M$ may be defined as the image of the homomorphism $\pi_1(Y(\mathbb{C}),y_0)$ to $S_n$ given by the monodromy action. It is a transitive subgroup of $S_n$ precisely when $X(\mathbb{C})$ is assumed connected.} One can find maps $X' \rightarrow X \rightarrow Y$, where $X'$ is smooth and irreducible, such that the extension of function fields $k(X')/k(Y)$ is a Galois closure of $k(X)/k(Y)$ \footnote{Here is a brief sketch of a proof valid in the more general case when the morphism $X \rightarrow Y$ is finite, flat and separable. The assumptions on $X$ and $Y$ imply $X$ and $Y$ are normal and irreducible. The morphism has a well-defined degree $d$. Take the fiber product of $X$ with itself $d$ times, remove the fat diagonal from this fiber product, and normalize irreducible components. The desired $X'$ will be an (any) irreducible $k$-component.} Setting $G = \mathrm{Gal}(k(X')/k(Y))$, $X' \rightarrow Y$ is a $G$-torsor, hence a choice of fiber $x'_0$ in the preimage of $y_0$ under the map $X'(\mathbb{C}) \rightarrow Y(\mathbb{C})$ corresponds to an epimorphism $\pi_1(Y(\mathbb{C}),y_0) \twoheadrightarrow G$. In fact, this epimorphism induces an isomorphism of the monodromy group and $G$, $M \overset{\sim}{\rightarrow} G$. Thus, computing a topological monodromy group of the complex points of a finite, \'{e}tale cover is reduced to computing the Galois group of a Galois closure of the induced extension of function fields. \\

Finally, by the exposition in Hartshorne's textbook \cite[Section I.6]{hartshorne1977algebraic}, a dominant morphism $X \rightarrow Y$ of smooth, connected, projective schemes over $\mathbb{C}$ of pure dimension $1$ gives an isomorphism

\begin{displaymath}
\textrm{Aut}(k(X)/k(Y)) \simeq \textrm{Aut}(X/Y)
\end{displaymath}
between $\textrm{Aut}(k(X)/k(Y))$ and the group of automorphisms of $X$ over $Y$. Further, if $\pi:X \rightarrow Y$ is a finite, \'{e}tale (away from a finite set of points) of smooth, connected, projective $\mathbb{C}$-schemes, and $\left\{p_1,...,p_l\right\} \subset Y$ is the branch locus, $V:= Y \setminus \left\{p_1,...,p_l\right\}$, then
\begin{displaymath}
\pi|_{U}:U \rightarrow V
\end{displaymath}
where $U = \pi^{-1}(V)$, is a connected covering map and in fact
\begin{displaymath}
\textrm{Aut}(X/Y) \simeq \textrm{Deck}(U(\mathbb{C})/V(\mathbb{C})).
\end{displaymath}

Further, the deck group acts transitively when $k(X)/k(Y)$ is Galois, so by the discussion at the beginning of the subsection, in such a situation $\textrm{Gal}(k(X)/k(Y))$ is isomorphic to the monodromy group of $\pi|_{U}:U\rightarrow V$. \\

We record a consequence of the above review pertinent to our purposes.

\begin{prop}
\label{prop_Galois}
Let $E$ be a complex elliptic curve, $C$ a compact Riemann surface, and $\pi:C \rightarrow E$ a finite covering map with branching only possible over the origin of  $O$ of $E$. Define $C^{\circ}:= \pi^{-1}(O)$. The monodromy group of the finite \'{e}tale map $C^{\circ} \rightarrow E^{\circ}$, the Galois closure of the extension $\mathbb{C}(C)/\mathbb{C}(E)$, and the deck group $\mathrm{Deck}({C^{\circ}}'(\mathbb{C})/E^{\circ}(\mathbb{C}))$ are all isomorphic, where ${C^{\circ}}'$ is a Galois closure of the cover.
\end{prop}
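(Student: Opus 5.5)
The plan is to assemble the isomorphisms already reviewed in this subsection into a chain. Here $C^{\circ}$ should be read as $C - \pi^{-1}(O)$, as in the preceding discussion. Fix a base point $y_0 \in E^{\circ}(\mathbb{C})$.

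First, the map $C^{\circ} \to E^{\circ}$ is a finite covering map of connected Riemann surfaces. By Riemann's existence theorem (the GAGA correspondence cited above) it is a finite \'etale morphism of smooth connected curves over $\mathbb{C}$. Its compactification $\pi: C \to E$ is a dominant morphism of smooth projective curves. So the function field extension $\mathbb{C}(C)/\mathbb{C}(E)$ is finite and separable, since we are in characteristic $0$, and of degree $d = \deg \pi$.

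Second, I construct the Galois closure by the fiber-product recipe in the footnote. This gives $X' \to C \to E$ with $X'$ smooth, irreducible and projective, such that $\mathbb{C}(X')/\mathbb{C}(E)$ is a Galois closure of $\mathbb{C}(C)/\mathbb{C}(E)$. The key check is that $X' \to E$ is still unramified over $E^{\circ}$. For this I use the fact that the compositum of extensions unramified at a place is unramified there: each conjugate field $\sigma(\mathbb{C}(C))$ is unramified over every point of $E^{\circ}$. Hence ${C^{\circ}}' := X' \setminus (\text{preimage of } O)$ is a $G$-torsor over $E^{\circ}$, with $G = \mathrm{Gal}(\mathbb{C}(X')/\mathbb{C}(E))$.

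Third, I choose a point $x_0' \in {C^{\circ}}'$ over $y_0$. By the correspondence recalled above, this yields an epimorphism $\pi_1(E^{\circ}(\mathbb{C}),y_0) \twoheadrightarrow G$. Its kernel equals the core (normal core) of the stabilizer subgroup defining $C^{\circ}$. That core is exactly the kernel of the monodromy action on the fiber of $C^{\circ}$, because the Galois closure corresponds to the largest normal subgroup contained in the point stabilizer. This gives $M \cong G$. Finally, I apply the Hartshorne isomorphism $\mathrm{Aut}(X'/E) \cong \mathrm{Aut}(\mathbb{C}(X')/\mathbb{C}(E)) = G$, together with $\mathrm{Aut}(X'/E) \cong \mathrm{Deck}({C^{\circ}}'(\mathbb{C})/E^{\circ}(\mathbb{C}))$ applied with $V = E^{\circ}$. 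This identifies the deck group with $G$ and completes the chain.

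The main obstacle is the identification of the monodromy group of $C^{\circ}$, rather than of the closure, with $G$. This requires matching the normal core of the point stabilizer with the kernel of $\pi_1 \to G$. I will do this via the Galois correspondence between intermediate covers and subgroups of $\pi_1(E^{\circ}(\mathbb{C}),y_0)$, using minimality of the Galois closure on both sides. The unramifiedness of the closure over $E^{\circ}$ is the other point needing care, but it is standard.
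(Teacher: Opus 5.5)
Your proposal is correct and follows essentially the same route as the paper, which proves the statement only through the review preceding it. That review chains the fact that Riemann existence gives $\pi_1(E^{\circ}(\mathbb{C}),y_0)\twoheadrightarrow G$ with $M\cong G$ to the chain $\mathrm{Gal}(\mathbb{C}(X')/\mathbb{C}(E))\cong\mathrm{Aut}(X'/E)\cong\mathrm{Deck}$, and the remark after the statement adds the normal-core description. Your explicit checks that the Galois closure remains unramified over $E^{\circ}$ and that the kernel of $\pi_1\to G$ is exactly $\mathrm{Core}(H)$ fill in steps the paper leaves implicit, and you are right to read $C^{\circ}$ as $C-\pi^{-1}(O)$, which corrects the typo in the statement.
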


\begin{remark}
If $\pi:C^{\circ} \rightarrow E^{\circ}$ is a square-tiled surface, ${C^{\circ}}^{'} \rightarrow C^{\circ} \rightarrow E^{\circ}$ is a Galois closure, and $H \leq F_2 \cong \pi_1({E^{\circ}})$ is the subgroup corresponding to (the complex points of ) $C^{\circ} \rightarrow E^{\circ}$, by standard covering theory the (complex points of) this Galois closure corresponds to the subgroup given by the \textit{normal core}
\begin{displaymath}
\mathrm{Core}(H) = \bigcap_{g\in F_2} gHg^{-1}.
\end{displaymath}
In other words, $\mathrm{Deck}({C^{\circ}}'(\mathbb{C})/E^{\circ}(\mathbb{C})) \cong F_2/\mathrm{Core}(H)$.
\end{remark}

\section{Good reduction}
\

\subsection{Moduli stacks and their compactifications}
\label{modulistacks}

We collect, in a streamlined fashion, background material on some of the moduli spaces that will play a role in our work. In Chen's thesis  \cite{chen2018moduli}, he uses a moduli space of $G$-structures on elliptic curves (for the earliest definition of $G$-structures see \cite{DeligneMumford1969}, and see \cite{PikaartDeJong1997} for a treatment for moduli spaces of higher genus curves) an analog of a moduli space elliptic curves with level structure, where the acting group can be nonabelian. For a comprehensive treatment of the content in this subsection pertaining to moduli spaces of $G$-structures on elliptic curves, we refer the reader to sections 2 and 3 of \cite{chen2018moduli}, which we closely follow. For foundational material on algebraic stacks, see \cite{Olsson2016}.

\begin{definition}[Elliptic curves over schemes]
\label{ellcurve}
An elliptic curve $E$ over a scheme $S$ is a smooth, proper morphism $f: E \rightarrow S$ equipped with a section $e:S \rightarrow E$, such that the geometric fibers of $f$ are connected schemes of pure dimension $1$ and arithmetic genus $1$.
\end{definition}

Let $G$ be a finite \'{e}tale group scheme, and $|G|$ the order of $G$. Let $\mathbb{S} := \textrm{Spec} \ \mathbb{Z}[1/|G|]$. Denote by $\underline{\textbf{Sch}}/\mathbb{S}$ the category of schemes over $\mathbb{S}$, and by $\underline{\textbf{Sets}}$ the category of sets. In  \cite[Section 13.1]{Olsson2016}, a category fibered over $\underline{\textbf{Sch}}/\mathbb{S}$ is constructed, whose objects are triples $(S,E,e)$ as above, where $S$ is an $\mathbb{S}$-scheme and $e$ is the $\textit{zero section}$ of $f:E \rightarrow S$. This is a smooth Deligne-Mumford stack over $\mathbb{S}$, the $\textit{moduli stack of elliptic curves over $\mathbb{S}$}$, which we will denote by $\mathcal{M}(1)$. We will denote its coarse moduli space by $M(1)$.

Let $f: E \rightarrow S$ be an elliptic curve over a connected scheme $S$ and $E^{\circ}:= E - e(S)$ where $e$ is the zero section. Let $\mathbb{L}$ be the set of invertible primes on the scheme $S$, $g:S \rightarrow E^{\circ}$ a section, and $s \in S$ a geometric point. We will denote by 
$\pi_1^{\mathbb{L}}(E^{\circ}/S,g,s)$, the relative \'{e}tale fundamental group (see \cite[Section 2.1.5]{chen2018moduli} for application to the context at hand, and \cite[Section \textrm{V}]{GrothendieckSGA1} for the construction), which is a formal cofiltered limit in the category of finite \'{e}tale group schemes over $S$. Let $G$ a finite \'{e}tale group scheme over $S$, where $|G|$ is invertible on $S$. Denote by 
$(\underline{\textbf{Sch}}/S)_{\text{\'{e}t}}$ the \'{e}tale site of the category of schemes over $S$. Grothendieck showed \cite[Expos\'{e} XIII, 4.5.3]{GrothendieckSGA1} that $\pi_1^{\mathbb{L}}(E^{\circ}/S,g,s)$ is independent, up to canonical isomorphism, of the choice of geometric base point $s \in S$. We may thus consider, without ambiguity, the $\mathrm{Hom}$-sheaf over $(\underline{\textbf{Sch}}/S)_{\text{\text{\'{e}t}}}$, $\Hom(\pi_1^{\mathbb{L}}(E^{\circ}/S,g),G)$, and the subsheaf consisting only of surjective homomorphisms, $\Hom^{\mathrm{sur}}(\pi_1^{\mathbb{L}}(E^{\circ}/S,g),G)$.

On the other hand, $\pi_1^{\mathbb{L}}(E^{\circ}/S,g)$ depends on the choice of section $g$. Nevertheless, there is a natural action of $G$ on $\Hom^{\mathrm{sur}}(\pi_1^{\mathbb{L}}(E^{\circ}/S,g),G)$ by the inner automorphism of group of $G$, which we denote by $\mathrm{Inn}(G)$, and Chen shows \cite[Proposition 2.2.2]{chen2018moduli}, the quotient sheaf $\Hom^{\mathrm{sur}-\mathrm{ext}}(\pi_1(E^{\circ}/S), G):= \Hom(\pi_1^{\mathbb{L}}(E^{\circ}/S),g,G)/\mathrm{Inn}(G)$ is independent, up to canonical isomorphism, of the choice of section $g$. Thus, one can define, without ambiguity 
\begin{displaymath}
\Hom^{\mathrm{sur}-\mathrm{ext}}(\pi_1(E^{\circ}/S), G):= \Hom(\pi_1^{\mathbb{L}}(E^{\circ}/S),g,G)/\mathrm{Inn}(G).
\end{displaymath}

Further, again appealing to Expos\'{e} XIII, 4.5.3 of \cite{GrothendieckSGA1}, Chen defines a sheaf over the \'{e}tale site $(\underline{\textbf{Sch}}/S)_{\text{\'{e}t}}$, from which he can define a $G$-structure on an elliptic curve.

\begin{definition}[$G$-structures, c.f. {\cite[Definition 2.2.3]{chen2018moduli}}]
\label{G-structure}
Let $E$ be an elliptic curve with zero section $e$, and $E^{\circ}:= E-e(S)$. The following assignment defines a presheaf
\begin{displaymath}
(T\rightarrow S) \mapsto \begin{cases}
			\Hom^{\mathrm{sur}-\mathrm{ext}}(\pi_1(E^{\circ}/S),G), & \text{if $E^{\circ}/T$ admits a section $g$ over $T$}\\
            \emptyset, & \text{otherwise}.
		 \end{cases}
\end{displaymath}
We will use the notation $\Hom^{\mathrm{sur}-\mathrm{ext}}(\pi_1(E^{\circ}/S),G)$ for the sheafification of the presheaf defined by this assignment, and define a $G$-structure on $E/S$ to be a global section of this sheaf.
\end{definition}

Finally, can define a functor 

\begin{displaymath}
\mathcal{T}_G: \mathcal{M}(1)_{\mathbb{Z}[1/|G|]} \rightarrow \underline{\textbf{Sets}}
\end{displaymath}
which associates to an object $(S,E,e)$, set of global sections of the sheaf $\Hom^{\mathrm{sur}-\mathrm{ext}}(\pi_1(E^{\circ}/S),G)$ of Definition \ref{G-structure}. This assignment defines on a sheaf on $\mathcal{M}(1)_{\mathbb{Z}[1/|G|]}$ (\cite[Proposition 3.1.2]{chen2018moduli}).
\\

Let $E/S$ be an object in $\mathcal{M}(1)_{\mathbb{Z}[1/|G|]}$. A $G$-structure on $E/S$ is then an element of $\mathcal{T}_G(E/S)$. Chen defines the category  $\mathcal{M}(G)$, fibered over $\underline{\textbf{Sch}}/\mathbb{Z}[1/|G|]$,  to be the category whose objects are pairs $(E/S,\alpha)$, where $E/S$ is an object in $\mathcal{M}(1)$ and $\alpha \in \mathcal{T}_G$ is a $G$-structure. A morphism in $\mathcal{M}(G)$ is a morphism in $\mathcal{M}(1)$, respecting the $G$-structures on the objects in $\mathcal{M}(1)$ (see \cite[Definition 3.1.3]{chen2018moduli} for the formal definition of morphisms).

From Chen's construction we have sketched in this subsection, it is almost immediate that

\begin{theorem}[\cite{chen2018moduli}, Proposition 3.1.4]
\label{etale}
The forgetful map $\mathfrak{f}:\mathcal{M}(G) \rightarrow \mathcal{M}(1)$ sending $(E/S,\alpha)$ to $E/S$ is finite \'{e}tale. Thus, $\mathcal{M}(G)$ is a smooth Deligne-Mumford stack over $\mathrm{Spec} \ \mathbb{Z}[1/|G|]$.
\end{theorem}

For our purposes, we will need a smooth compactification of $\mathcal{M}(G)$ over $\mathbb{Z}[1/|G|]$. The compactification we will use is, briefly, a moduli space of admissible $G$-covers of stable, $1$-pointed elliptic curves in the sense of Kontsevich \cite{manin1999frobenius}. We refer \cite[Section {2}]{chen2024nonabelian} for more details, and record

\begin{prop}[\cite{chen2024nonabelian}, Proposition 2.5.10]
\label{compactification}
    There is a smooth, proper, Deligne-Mumford stack $\overline{\mathcal{M}(G)}$, containing $\mathcal{M}(G)$ as an open, dense substack.
\end{prop}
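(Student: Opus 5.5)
The plan is to realize $\overline{\mathcal{M}(G)}$ as a moduli stack of \emph{admissible $G$-covers} of $1$-pointed stable curves of arithmetic genus $1$ over $\mathbb{Z}[1/|G|]$, equivalently as the stack of balanced twisted stable maps to $\mathcal{B}G$ in the sense of Abramovich--Corti--Vistoli. An object over $S$ is a stable $1$-pointed genus $1$ curve $(\overline{E}/S,e)$ together with a finite flat $G$-cover $\overline{C}\to \overline{E}$ satisfying three conditions:
\begin{itemize}
\item it is étale away from $e(S)$ and the nodes;
\item it has tame ramification over $e$;
\item it has balanced ramification at the nodes, with opposite characters on the two branches.
\end{itemize}
Each object is taken up to conjugation by $\mathrm{Inn}(G)$, to match the quotient in Definition~\ref{G-structure}. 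Over the smooth locus $\overline{E}=E$, restricting the cover to $E^{\circ}$ and passing to its monodromy recovers exactly a global section of $\Hom^{\mathrm{sur}-\mathrm{ext}}(\pi_1(E^{\circ}/S),G)$. This gives an open immersion $\mathcal{M}(G)\hookrightarrow\overline{\mathcal{M}(G)}$ lying over $\mathcal{M}(1)\subset\overline{\mathcal{M}}_{1,1}$. Surjectivity onto the $G$-structures uses tameness: since $|G|$ is invertible, every $G$-torsor over $E^{\circ}$ extends uniquely to a normal $G$-cover of $E$ tamely branched along $e$, by Abhyankar's lemma and purity.

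\textbf{Algebraicity and the Deligne--Mumford property.} I would present the stack as a finite-type stack over $\overline{\mathcal{M}}_{1,1}$: admissible covers of bounded degree $|G|$ over a fixed base are parametrized by a quasi-finite representable morphism. Automorphisms of objects are finite and unramified because $|G|$ is invertible, so the diagonal is unramified.

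\textbf{Properness.} I would verify the valuative criterion over a DVR $R$ with $|G|\in R^\times$. Given a generic object, first use properness of $\overline{\mathcal{M}}_{1,1}$ to extend the base curve after a finite extension of $R$. Then take the normalization of $\overline{E}_R$ in the function field of the generic cover. Because the ramification is tame, Abhyankar's lemma, applied after a further base change to adjoin roots of the uniformizer, makes the special fiber nodal and admissible over the nodes of the special fiber. Separatedness follows from uniqueness of normalization.

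\textbf{Smoothness.} This is the main obstacle. I would run deformation theory of the pair (curve, cover) via the local-to-global spectral sequence and reduce to local deformations at three kinds of points:
\begin{itemize}
\item At étale points, the cover deforms uniquely with the base.
\item At the tame marked point, the local model $x\mapsto x^m$ with $m$ invertible is rigid.
\item At a balanced node, the local model is $uv=t$ downstairs and $\tilde u\tilde v=\tilde t$ upstairs with $t=\tilde t^{\,m}$, an $\mu_m$-quotient that is smooth in $\tilde t$.
\end{itemize}
Global obstructions then vanish, because the base $\overline{\mathcal{M}}_{1,1}$ is smooth and the $G$-invariant parts of the relevant $H^2$ vanish on curves, using that taking invariants is exact when $|G|$ is invertible. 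This yields smoothness over $\mathbb{Z}[1/|G|]$, with the deformation parameter $\tilde t$ giving a chart in which the locus of nodal curves is a divisor.

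\textbf{Density.} In that chart every boundary point deforms to $\tilde t\neq 0$, that is, into the smooth locus. Hence $\mathcal{M}(G)$ is dense in $\overline{\mathcal{M}(G)}$.
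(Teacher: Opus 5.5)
Your overall route is the intended one. The paper gives no argument of its own: it cites Chen and describes $\overline{\mathcal{M}(G)}$ as a moduli stack of admissible $G$-covers of stable $1$-pointed genus $1$ curves. Your properness sketch (normalization plus tame Abhyankar) and smoothness sketch (balanced local model $t=\tilde t^{\,m}$, vanishing of $H^2$ on curves) are the standard ones.

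\textbf{The gap: the open part is not $\mathcal{M}(G)$ when $Z(G)\neq 1$.}
\begin{itemize}
\item If $X\to E^{\circ}$ is a $G$-torsor with surjective monodromy, its $G$-equivariant automorphisms over $\mathrm{id}_E$ are exactly the elements of $Z(G)$. So every object of your stack over $\mathcal{M}(1)$ has $Z(G)$ inside its automorphism group.
\item Chen's $\mathcal{M}(G)$, by contrast, is finite \'{e}tale, hence representable, over $\mathcal{M}(1)$ (Theorem~\ref{etale}). A $G$-structure is a section of a sheaf of sets, and $\mathrm{Aut}(E/S,\alpha)\subseteq\mathrm{Aut}(E/S)$.
\item Hence the part of your stack over $\mathcal{M}(1)$ is a $Z(G)$-gerbe over $\mathcal{M}(G)$, not $\mathcal{M}(G)$ itself. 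Since open immersions preserve automorphism groups, there is no open immersion $\mathcal{M}(G)\hookrightarrow\overline{\mathcal{M}(G)}$.
\item The same problem breaks your surjectivity step: a global section of the sheafified $\mathrm{Inn}(G)$-quotient comes from a $G$-torsor on $E^{\circ}$ only \'{e}tale-locally on $S$.
\item Saying that objects are ``taken up to conjugation by $\mathrm{Inn}(G)$'' does not help. Twisting the action $\rho$ by $c_h$ gives an object isomorphic to $(X,\rho)$ via $\rho(h)$, so neither isomorphism classes nor automorphism groups change.
\end{itemize}

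\textbf{The missing step is a rigidification.} Let $\mathcal{A}dm(G)$ be the open and closed substack of your stack on which the covering curve has geometrically connected fibers. You need this restriction anyway: otherwise covers with non-surjective monodromy form extra components and density fails. Then set $\overline{\mathcal{M}(G)}:=\mathcal{A}dm(G)/\!\!/Z(G)$, the Abramovich--Corti--Vistoli rigidification along the central subgroup $Z(G)$ of the inertia.
\begin{itemize}
\item The map $\mathcal{A}dm(G)\to\overline{\mathcal{M}(G)}$ is a gerbe banded by the finite \'{e}tale group $Z(G)$, hence smooth, proper and surjective. So smoothness, properness, the Deligne--Mumford property and density descend from what you prove for $\mathcal{A}dm(G)$.
\item Over $\mathcal{M}(1)$ the rigidified stack has trivial relative inertia, and its sheaf of isomorphism classes is $\mathcal{T}_G$. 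So it is exactly $\mathcal{M}(G)$.
\end{itemize}
Alternatively, take the normalization of $\overline{\mathcal{M}}_{1,1}$ in $\mathcal{M}(G)$. This is a different stack in general, but also a valid compactification. Prove it smooth with your tame local analysis, using that cusp widths divide orders of elements of $G$ and so are prime to the residue characteristics.

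\textbf{A smaller correction.} Drop the claim that the forgetful map to $\overline{\mathcal{M}}_{1,1}$ is representable. Its relative inertia contains $Z(G)$, and over the nodal cubic it can contain further ghost automorphisms even when $Z(G)=1$. For example, take $G=S_3$ with vanishing cycle mapping to a $3$-cycle: there a $\mu_3$ acts nontrivially on the smoothing parameter. The Deligne--Mumford property needs only quasi-finiteness and finite unramified inertia.

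When $Z(G)=1$, no rigidification is needed, and with the connectedness restriction your outline is fine.
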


\subsection{Teichm\"{u}ller uniformization}
\label{TM_section}

In this subsection, we relate the constructions in Section \ref{Veech} to Subsection \ref{modulistacks}, by Teichm{\"u}ller uniformization, and the orbifold quotient (quotient stack) arising from a Veech group of a square-tiled surface. We refer to \cite{perez2024orbifolds} for an in-depth assembly of constructions and results pertaining to framings of elliptic curves, and modular curves from the viewpoint of quotient stacks. We will appeal to \cite{Noohi2004} to for the relationship between a stack constructed as a functor on an analytic category, and the stack constructed as a functor on an algebraic category.  \\

Our goal is to demonstrate that \textit{Teichm\"{u}ller uniformization furnishes an isomorphism between the quotient stack of a modular curve uniformized by a Veech group, and a component of $\mathcal{M}(G)_{\mathbb{C}}$}. This already appears in Chen's survey paper \cite{Chen2025NoncongruenceHurwitz}, with all key ideas present.
\\



By \cite[Expos\'{e} XIII, Corollary 2.12]{GrothendieckSGA1}, if $E$ is an elliptic curve over $\mathbb{C}$, there are canonical isomorphisms $\mathrm{Hom}(\pi_1^{\mathbb{L}}(E^{\circ}),G) \cong \mathrm{Hom}(\pi_1(E^{\circ}),G) \cong \mathrm{Hom}(F_2,G)$. Further, again since $E$ is an elliptic curve over $\mathbb{C}$, by \cite[Proposition 2.2.6]{chen2018moduli}, there are bijections
\begin{displaymath}
\left\{\textrm{$G$-structures on $E$}\right\} \overset{\sim}{\rightarrow} \left\{\textrm{$G$-torsors on $E^{\circ}$ up to isomorphism}\right\} \overset{\sim}{\rightarrow} \mathrm{Hom}^{\mathrm{sur-ext}}(F_2,G).
\end{displaymath}

By the construction of $\mathcal{M}(G)$, the fiber of the forgetful $\frak{f}$ map of Theorem \ref{etale}, over an elliptic curve $E \in \mathcal{M}(1)_{\mathbb{C}}$ may be identified with the set of $G$-structures on $E$ and therefore with the set of $G$-torsors on $E^{\circ}$ (up to isomorphism of the $G$-torsors). 

For $E$ be an elliptic curve over $\mathbb{C}$, recall \cite[Definition 1.9]{perez2024orbifolds} that a \textit{framing} of $E$ is a choice of basis $([e_1],[e_2])$ of the free $\mathbb{Z}$-module $H_1(E(\mathbb{C}),\mathbb{Z}) \cong \mathbb{Z}^2$ such that for any non-zero section $\omega \in H^{0}(E,{\Omega_{E}^1})$, $\mathrm{Im}\frac{\int_{e_1}\omega}{\int_{e_2}\omega} > 0$. Now, assume $\pi: C \rightarrow E_0$ is a branched cover of the standard elliptic curve, $\mu$ is a translation structure on $E_0$ and $(C^{\circ},\pi^{*}\mu)$ is square-tiled. Let $\mathcal{T}(S)$ be the Teichm\"{u}ller space of the topological surface $S$ underlying $E_0$. There is an isomorphism
\begin{equation}
\label{GL(2,R)}
\mathbb{C}^{\times} \backslash \mathrm{GL}^{+}(2,\mathbb{R}) \overset{\sim}{\rightarrow} \mathcal{T}(S)
\end{equation}
induced by deforming the complex structure on $S$ obtained from the translation structure on $E_0$. If $\pi$ is a $G$-torsor, we may consider the connected component $\mathcal{H}(\pi)$ of $\mathcal{M}(G)_{\mathbb{C}}$ containing $\pi$. There is then a natural map $\mathcal{T}(S) \rightarrow \mathcal{H}(\pi)$ obtained as follows: choose an isomorphism $f:\mathbb{Z}^2 \overset{\sim}{\rightarrow} H_1(S,\mathbb{Z})$ such that the intersection number is $+1$ on the canonical basis of $\mathbb{Z}^2$; $f[(1,0)] \cap f[(0,1)] = +1$. Then, as intersection number is preserved by homeomorphism, for a marked complex elliptic curve $(E,m)$ in $\mathcal{T}(S)$, composing $f$ with the marking $m$ of $E$ gives a framing of $E$. Since the hyperbolic plane $\mathbb{H}$ is a fine moduli space for the space of framed complex elliptic curves, we have a map
\begin{equation}
\label{u_T}
u_{\mathcal{T}}: \mathcal{T}(S) \overset{\sim}\rightarrow \mathbb{H} \rightarrow \mathcal{M}(1)_{\mathbb{C}}.
\end{equation}
Now, let $T$ be the underlying topological space of $C$ and continue to denote the topological cover $T \rightarrow S$ underlying $C \rightarrow E_0$ by $\pi$. The map $u_{\mathcal{T}}$ (\ref{u_T}) factors through $u_{\pi}:\mathcal{T}(S) \rightarrow \mathcal{H}(\pi)$ where $u_{\pi}(E,m):= m\circ \pi$. As noted above, isomorphism classes of $G$-torsors on punctured elliptic curves over $\mathbb{C}$ can be identified with elements of $\mathrm{Hom}^{\textrm{sur-ext}}(F_2,G)$, and in fact $u_{\pi}$ induces an isomorphism of stacks
\begin{displaymath}
\mathcal{H}(\pi) \cong [\mathbb{H}/\Gamma_{\pi}]
\end{displaymath}
where $\pi$ is viewed as an element of $\mathrm{Hom}^{\textrm{sur-ext}}(F_2,G)$, and $\Gamma_{\pi}$ is the $\mathrm{Out}^{+}(F_2) \cong \mathrm{SL}(2,\mathbb{Z})$-stabilizer of $\pi$. By inspecting the constructions of the maps (\ref{GL(2,R)}) and (\ref{u_T}), one readily checks \\





\begin{prop}
\label{well-defined}
The Teichm\"{u}ller uniformization $\mathcal{T}(S) \rightarrow \mathcal{H}(\pi)$ induces a well-defined map 
\begin{equation}
\label{well-defined}
[\mathbb{C}^{\times} \backslash \mathrm{GL}^{+}(2,\mathbb{R}) / \Gamma(C^{\circ}, \pi^* \mu)_G] \rightarrow \mathcal{H}(\pi).
\end{equation}
\end{prop}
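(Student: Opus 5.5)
We need to show that the composite $\mathrm{GL}^{+}(2,\mathbb{R}) \to \mathcal{T}(S) \xrightarrow{u_\pi} \mathcal{H}(\pi)$ is invariant under left multiplication by $\mathbb{C}^{\times}$ and right multiplication by $\Gamma(C^{\circ},\pi^*\mu)_G$. We must also check that it respects the stacky (groupoid) structure.

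\emph{Left $\mathbb{C}^\times$-invariance.} This is built into (\ref{GL(2,R)}). Writing $\mathbb{C}^\times \subset \mathrm{GL}^+(2,\mathbb{R})$ as the rotation–dilation matrices, acting by such a matrix on the translation chart of $E_0$ changes the flat structure by a holomorphic change of coordinate $z \mapsto \lambda z$. It therefore leaves the underlying complex structure and marking unchanged. So the composite already factors through $\mathbb{C}^\times\backslash \mathrm{GL}^+(2,\mathbb{R}) \cong \mathcal{T}(S) \cong \mathbb{H}$.

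\emph{Right $\Gamma(C^{\circ},\pi^*\mu)_G$-invariance.} Let $A = \mathbf{der}(\phi)$ with $\phi \in \mathrm{Aff}^+(C^{\circ},\pi^*\mu)$ a $G$-equivariant affine diffeomorphism. By \cite[Proposition 2.6]{schmithusen2004}, $\phi$ descends to an affine diffeomorphism $\bar\phi$ of $E_0^{\circ}$ (extending to $E_0$ and fixing $O$), with derivative $A$. For $g \in \mathrm{GL}^+(2,\mathbb{R})$, the point $g$ gives the elliptic curve $E_g$ (the torus $E_0$ with flat structure post-composed by $g$) together with the $G$-torsor $\pi_g : C_g^{\circ} \to E_g^{\circ}$ obtained by the same post-composition. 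The point $gA$ gives $E_{gA}$ with marking precomposed by $\bar\phi_*$.

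The key step is that $\bar\phi : E_{gA} \to E_g$ is a biholomorphism, since in flat charts it is affine with derivative $gAg'^{-1}$ equal to a translation. Similarly, $\phi : C_{gA} \to C_g$ is a biholomorphism covering $\bar\phi$. Since $\phi$ commutes with the $G$-action, the pair $(\bar\phi,\phi)$ is an isomorphism of $G$-torsors over $E^{\circ}$, that is, an isomorphism in $\mathcal{M}(G)_{\mathbb{C}}$ between the images of $g$ and $gA$.

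On the level of $\mathrm{Hom}^{\mathrm{sur-ext}}(F_2,G)$, this says the outer class of $\bar\phi_* \in \mathrm{Out}^+(F_2) \cong \mathrm{SL}(2,\mathbb{Z})$ (which equals $A$ by \cite[Corollary 2.7]{schmithusen2004}) fixes the class of $\pi$. Hence $\Gamma(C^{\circ},\pi^*\mu)_G \subseteq \Gamma_\pi$, and the map descends to the coarse level.

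\emph{Stack level.} The assignment $A \mapsto (\bar\phi,\phi)$ gives, for each $A$, a specified $2$-isomorphism in $\mathcal{H}(\pi)$. To obtain a morphism from the quotient stack, these isomorphisms must satisfy the cocycle condition for products $AB$. This holds because $\phi$ is determined by $A$ up to the kernel of $\mathbf{der}$, namely translations and automorphisms of the cover. Restricting to $G$-equivariant ones, the ambiguity lies in $Z(G)$-type automorphisms of the torsor, which are precisely the automorphisms that $\mathcal{H}(\pi)$ already records.

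I expect this last point — the compatibility of the chosen lifts $\phi$ with composition, i.e.\ that $\ker\mathbf{der}$ acts through automorphisms of the object in $\mathcal{H}(\pi)$ — to be the main obstacle. I would handle it by working with the groupoid $\mathrm{Aff}^+(C^{\circ},\pi^*\mu)_G$ itself acting on $\mathbb{H}$ rather than with its image in $\mathrm{SL}(2,\mathbb{Z})$. I would then invoke the identification $\mathcal{H}(\pi) \cong [\mathbb{H}/\Gamma_\pi]$, composing the resulting map with the inclusion $\Gamma(C^{\circ},\pi^*\mu)_G \hookrightarrow \Gamma_\pi$.
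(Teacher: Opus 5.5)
Your coarse-level argument is correct, and it is more explicit than the paper. The paper's proof is only ``by inspecting the constructions''; it implicitly uses $\mathcal{H}(\pi)\cong[\mathbb{H}/\Gamma_\pi]$ together with $\Gamma(C^{\circ},\pi^*\mu)_G\subseteq\Gamma_\pi$, and your second step establishes exactly that inclusion. One small slip: in the flat charts of $E_{gA}$ and $E_g$ the derivative of $\bar\phi$ is $gA(gA)^{-1}=I$, and your $g'$ is undefined.

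\textbf{The gap is in the stack-level paragraph.} It rests on a false claim, and the fix you propose targets an obstruction that does not exist while changing the source stack.
\begin{itemize}
\item In Chen's $\mathcal{M}(G)$, a $G$-structure is a section of a sheaf of \emph{sets} (surjections modulo $\mathrm{Inn}(G)$). Morphisms are morphisms in $\mathcal{M}(1)$ that preserve it. So $\mathcal{M}(G)\to\mathcal{M}(1)$ is representable, and $\mathrm{Aut}(E,[\pi])$ is the subgroup of $\mathrm{Aut}(E)$ fixing $[\pi]$. This is exactly the observation (Chen's Remark 3.1.5) on which the paper's Appendix rests.
\item Hence $\mathcal{H}(\pi)$ does \emph{not} record the $Z(G)$-automorphisms of the torsor; they all map to $\mathrm{id}_E$.
\item Your reasoning also fails on its own terms. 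If those automorphisms \emph{were} recorded, the relation $\phi_A\phi_B=z(A,B)\,\phi_{AB}$ with $z(A,B)\in Z(G)$ would be a genuine obstruction to a map out of $[\mathbb{H}/\Gamma(C^{\circ},\pi^*\mu)_G]$.
\item Letting $\mathrm{Aff}^+(C^{\circ},\pi^*\mu)_G$ act on $\mathbb{H}$ would then only give a map out of $[\mathbb{H}/\mathrm{Aff}^+(C^{\circ},\pi^*\mu)_G]$. That group acts through $\Gamma(C^{\circ},\pi^*\mu)_G$ with kernel $\ker\mathbf{der}\cap\mathrm{Aff}^+(C^{\circ},\pi^*\mu)_G=Z(G)$. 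So this quotient is a $Z(G)$-gerbe over the stack in the statement, not that stack itself.
\end{itemize}

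\textbf{The correct resolution needs no choice of lifts.} The isomorphism in $\mathcal{H}(\pi)$ attached to $A$ is $\bar\phi_A$, the unique affine self-map of $E_0$ fixing $O$ with derivative $A$. Therefore $\bar\phi_A\bar\phi_B=\bar\phi_{AB}$ holds on the nose, and the $Z(G)$-ambiguity in $\phi$ disappears. Equivalently, the $\Gamma(C^{\circ},\pi^*\mu)_G$-linearization is the restriction of the $\mathrm{SL}(2,\mathbb{Z})$-linearization of the universal framed elliptic curve over $\mathbb{H}$. The only thing left to check is that the locally constant $G$-structure $[\pi]$ on $\mathbb{H}$ is invariant, which is your second step. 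With this, your final sentence already gives a complete proof: the composite $[\mathbb{H}/\Gamma(C^{\circ},\pi^*\mu)_G]\to[\mathbb{H}/\Gamma_\pi]\cong\mathcal{H}(\pi)$. The detour through $\mathrm{Aff}^+$ should be dropped.
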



In fact,

\begin{lemma}[c.f. Proposition 4.13 \cite{Chen2025NoncongruenceHurwitz}]
\label{stackequivalence}

The Teichm\"{u}ller uniformization $\mathcal{T}(S) \rightarrow \mathcal{H}(\pi)$ induces an isomorphism of (complex-analytic) stacks
\begin{equation}
\label{iso_stacks}
[\mathbb{C}^{\times} \backslash \mathrm{GL}^{+}(2,\mathbb{R}) / \Gamma(C^{\circ}, \pi^* \mu)_G] \overset{\sim}{\rightarrow} \mathcal{H}(\pi).
\end{equation}
\end{lemma}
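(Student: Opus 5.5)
We will prove that the map of Proposition \ref{well-defined} is an isomorphism of analytic stacks by comparing both sides with the quotient stack $[\mathbb{H}/\Gamma_{\pi}]$. We already know from the discussion preceding Proposition \ref{well-defined} that $u_{\pi}$ identifies $\mathcal{H}(\pi)$ with $[\mathbb{H}/\Gamma_{\pi}]$, where $\Gamma_\pi$ is the $\mathrm{SL}(2,\mathbb{Z})$-stabilizer of the outer class of $\pi \in \mathrm{Hom}^{\mathrm{sur-ext}}(F_2,G)$. The reduction is to check two things. First, that $\mathbb{C}^{\times}\backslash \mathrm{GL}^{+}(2,\mathbb{R})$ maps $\mathrm{SL}(2,\mathbb{Z})$-equivariantly and isomorphically to $\mathbb{H}$ via (\ref{GL(2,R)}) and (\ref{u_T}). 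Second, that $\Gamma(C^{\circ},\pi^{*}\mu)_G = \Gamma_{\pi}$ as subgroups of $\mathrm{SL}(2,\mathbb{Z})$. Given both, the map (\ref{well-defined}) is the quotient of an equivariant isomorphism of analytic spaces by equal groups, hence an isomorphism of quotient stacks.

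\textbf{Step 1: the equivariant identification.} $\mathrm{GL}^{+}(2,\mathbb{R})$ acts on translation structures by postcomposing charts. Quotienting by $\mathbb{C}^{\times}$ (rotation and scaling) leaves only the complex structure together with the marking. The resulting map sends $A$ to the period ratio of $A\cdot\omega$ on the framing $f$, and is the standard diffeomorphism $\mathrm{SO}(2)\mathbb{R}_{>0}\backslash \mathrm{GL}^{+}(2,\mathbb{R}) \cong \mathbb{H}$. The right action of $\gamma \in \mathrm{SL}(2,\mathbb{Z})$, viewed as a change of marking $\gamma \in \mathrm{Out}^{+}(F_2)\cong \mathrm{Mod}(S^{\circ})$, corresponds to the Möbius action on framings. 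This is a direct check on periods.

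\textbf{Step 2: equality of groups.} This is the main point. Take $A \in \Gamma(C^{\circ},\pi^{*}\mu)_G$, realized by a $G$-equivariant affine diffeomorphism $\hat\phi$ of $C^{\circ}$. By \cite[Proposition 2.6]{schmithusen2004}, $\hat\phi$ descends to an affine diffeomorphism $\phi$ of $E_0^{\circ}$ with derivative $A$. The induced outer automorphism $\phi_*$ of $F_2=\pi_1(E_0^{\circ})$ preserves the kernel of $\pi:F_2\twoheadrightarrow G$. Since $\hat\phi$ is $G$-equivariant, the induced automorphism of $G$ is inner, so $\pi\circ\phi_* = \pi$ in $\mathrm{Hom}^{\mathrm{sur-ext}}(F_2,G)$. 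Hence $A=\mathbf{der}(\phi)\in\Gamma_\pi$, using that $\mathbf{der}$ on affine diffeomorphisms of $E_0^\circ$ agrees with the image in $\mathrm{Out}^+(F_2)\cong\mathrm{SL}(2,\mathbb{Z})$.

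Conversely, take $\gamma\in\Gamma_\pi$. The affine map of $E_0^{\circ}$ with linear part $\gamma$ fixing $O$ induces $\gamma$ on $F_2$ up to inner automorphisms; this uses that $\mathrm{Out}^+(F_2)$ is realized by the affine $\mathrm{SL}(2,\mathbb{Z})$ action. Because it fixes the class of $\pi$, by covering space theory it lifts to the $G$-torsor $C^{\circ}$. After composing with a deck transformation, the lift can be chosen to intertwine the $G$-action, since the automorphism of $G$ it induces is inner. The lift is affine for $\pi^{*}\mu$, and its derivative is $\gamma$.

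The delicate bookkeeping is the sign and inner-automorphism ambiguity, in particular whether $-I$ and the center $Z(G)$ cause a discrepancy. We will handle this by working throughout with $\mathrm{Out}$ and with $\mathrm{Inn}(G)$-orbits, exactly as in the definition of $\mathrm{Hom}^{\mathrm{sur-ext}}$. That keeps the automorphism groups of objects on both sides matched, so the comparison is of stacks and not merely of coarse spaces. Finally, the comparison between analytic stacks and the analytification of the algebraic $\mathcal{H}(\pi)$ will be taken from \cite{Noohi2004}.
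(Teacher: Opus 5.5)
Your proposal is correct, but it takes a different route from the paper.

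\textbf{The paper's route.} The paper proves directly that the Teichm\"uller map $\Pi$ is an equivalence of fibered categories. It calls essential surjectivity evident, and reduces full faithfulness to two conditions: $X\cong Y$ if and only if $\Pi(X)\cong\Pi(Y)$, and a bijection of automorphism groups. The second condition comes from matching two characterizations of automorphisms of a single object. On the moduli side, Chen's remark says automorphisms of a $G$-structure on $E$ are the automorphisms of $E$ that lift to the torsor. On the quotient side, the $G$-equivariant version of Schmith\"usen's Proposition 2.6(3) says automorphisms are those of the base torus that lift $G$-equivariantly.

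\textbf{Your route.} You instead use the identification $\mathcal{H}(\pi)\cong[\mathbb{H}/\Gamma_\pi]$ stated in the text before the lemma. You then reduce the lemma to the global identity $\Gamma(C^{\circ},\pi^{*}\mu)_G=\Gamma_\pi$ in $\mathrm{SL}(2,\mathbb{Z})$, plus an equivariant isomorphism $\mathbb{C}^{\times}\backslash\mathrm{GL}^{+}(2,\mathbb{R})\cong\mathbb{H}$.

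\textbf{What your route buys.} In either quotient, the stabilizer of $\tau$ is the acting group intersected with $\mathrm{Stab}_{\mathrm{SL}(2,\mathbb{Z})}(\tau)$. So your group equality gives both of the paper's conditions, and essential surjectivity, at once. Functoriality of quotient stacks in the pair (space, group) then gives the isomorphism over arbitrary analytic bases. The paper's argument only inspects automorphisms of individual objects.

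\textbf{Step 2.} Your Step 2 is the $G$-equivariant analogue of Schmith\"usen's Corollary 2.7 that the paper only gestures at in a remark. The adjustment by a deck transformation, when the induced automorphism of $G$ is inner, is the right mechanism. The center $Z(G)$ causes no discrepancy, for two reasons:
\begin{itemize}
\item Translations of $C^{\circ}$ descend to the identity on $E_0^{\circ}$. So the $G$-equivariant ones form exactly $Z(G)$, which lies in $\ker\mathbf{der}$.
\item Objects of $\mathcal{M}(G)$ have automorphism groups inside $\mathrm{Aut}(E)$, because $\mathcal{M}(G)\to\mathcal{M}(1)$ is representable.
\end{itemize}

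\textbf{What your route costs.} Essentially all of the analytic-versus-algebraic content now sits in the cited identification $\mathcal{H}(\pi)\cong[\mathbb{H}/\Gamma_\pi]$, which the paper asserts without proof. It amounts to the analytic description $\mathcal{M}(G)^{an}_{\mathbb{C}}\simeq[(\mathbb{H}\times\mathrm{Hom}^{\mathrm{sur-ext}}(F_2,G))/\mathrm{SL}(2,\mathbb{Z})]$, and you should cite it explicitly as such. In Step 1, also verify that the right action of the Veech group on $\mathbb{C}^{\times}\backslash\mathrm{GL}^{+}(2,\mathbb{R})$ matches the change-of-framing action on $\mathbb{H}$. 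This should be under the same identification $\mathrm{Out}^{+}(F_2)\cong\mathrm{SL}(2,\mathbb{Z})$ via $H_1$, so that ``equal groups'' holds literally rather than up to an automorphism of $\mathrm{SL}(2,\mathbb{Z})$.
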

To minimize disruption towards our main objectives, we defer the proof of Lemma \ref{iso_stacks} to Appendix \ref{Appendix_A}.

\begin{remark}
As mentioned, the key geometric fact underlying Lemma \ref{iso_stacks} is \cite[Corollary 2.7]{schmithusen2004}. It is straightforward to prove $G$-equivariant analogs of other results contained therein. For example, an equivariant analog of \cite[Lemma 2.8 (2)]{schmithusen2004} is as follows. Let $\pi:C^{\circ} \rightarrow E^{\circ}$, be a square-tiled surface that is also a $G$-torsor, with corresponding subgroup \(H \subset F_2\). If \(\varphi\) is a \(G\)-equivariant affine automorphism of \(C^{\circ}\), and \(\widehat{\varphi}\) is a lift to the universal cover, then

$$
\widehat{\varphi}\,H\,\widehat{\varphi}^{-1}=H.
$$
Equivalently, the automorphism of \(F_2\) induced by \(\widehat{\varphi}\) preserves the subgroup \(H\):

$$
\widehat{\varphi}_*(H)=H.
$$

\end{remark}








.

\section{Total noncongruence}
\label{total_noncongruence}

Let  $\Gamma < \mathrm{SL}(2,\mathbb{Z})$ be a finite-index subgroup, $l$ its Wohlfahrt level, and $e_l$ its level index. We will say \cite[Definition 3.6]{WeitzeSchmithuesen2015}  $\Gamma$ is totally noncongruence if $e_{l}=1$. By \cite[Corollary 1.1]{WeitzeSchmithuesen2015}, if $\Gamma$ is totally noncongruence, the natural map
\begin{displaymath}
\Gamma \rightarrow \mathrm{SL}(2,\mathbb{Z}/N\mathbb{Z})
\end{displaymath}
is surjective for every natural number. Further, by the discussions in Sections $2$ and $3$ in \cite{chen2026noncongruence}, if $\Gamma < \mathrm{SL}(2,\mathbb{Z})$ is proper and dense in $\mathrm{SL}(2,\hat{\mathbb{Z}})$, then it is totally noncongruence. Further, if $\overline{\Gamma}$ denotes the closure of $\Gamma$ inside $\mathrm{SL}(2,\hat{\mathbb{Z}})$, then we have
\begin{displaymath}
\overline{\Gamma} \cap \mathrm{SL}(2,\mathbb{Z}) = \Gamma^{c}
\end{displaymath}
where $\Gamma^c$ is the smallest congruence subgroup of $\mathrm{SL}(2,\mathbb{Z})$ containing $\Gamma$. Denote by $H(2)$ the moduli space of abelian differentials with a single double zero. The following is the main result of \cite{WeitzeSchmithuesen2015}.

\begin{theorem}[Theorem 1.3, \cite{WeitzeSchmithuesen2015}]
\label{Wohlfahrt}
Let $\Gamma$ be the Veech group of any translation surface in ${H}(2)$. Then either $\Gamma$ is totally noncongruence, or if $l$ the Wohlfahrt level of $\Gamma$, the index of the image of $\Gamma$ in $\mathrm{SL}(2,\mathbb{Z}/l\mathbb{Z})$ is $3$.
\end{theorem}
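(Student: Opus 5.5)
This is Theorem 1.3 of \cite{WeitzeSchmithuesen2015}, which the paper invokes rather than reproves. If I were to prove it, the plan would go as follows.

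Every $\Gamma$ in question is the Veech group of a surface in $H(2)$ that we may take to be square-tiled; otherwise $\Gamma$ is not arithmetic and the statement concerns only lattices commensurable with $\mathrm{SL}(2,\mathbb{Z})$. By McMullen's and Hubert--Lelièvre's classification, a primitive $n$-square-tiled surface in $H(2)$ lies in one of one or two explicit $\mathrm{SL}(2,\mathbb{Z})$-orbits, denoted $A_n$ and $B_n$. These orbits are distinguished by the number of integer Weierstrass points when $n$ is odd, and there is a single orbit when $n$ is even. The Veech group is then the stabilizer of a point in such an orbit, equivalently a subgroup of index $|A_n|$ or $|B_n|$. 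The strategy is to compute the Wohlfahrt level $l$ (the lcm of the cusp widths, readable from cylinder decompositions) and then the image $\overline{\Gamma}_l$ of $\Gamma$ in $\mathrm{SL}(2,\mathbb{Z}/l)$.

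\textbf{Step 1 (a congruence shadow of index $3$).} Consider the action on the $2$-torsion data: the parity of the spin structure, or equivalently the positions of the Weierstrass points relative to the lattice $\frac12\mathbb{Z}^2$. This gives an $\mathrm{SL}(2,\mathbb{Z})$-equivariant map from the orbit to the three nonzero elements of $(\mathbb{Z}/2)^2$, or to the three theta characteristics. For the odd-$n$ orbit where this map is nonconstant, $\Gamma$ lands in a conjugate of $\Gamma_0(2)$, so $\Gamma^c\subseteq\Gamma_0(2)$ and the index in $\mathrm{SL}(2,\mathbb{Z}/l)$ is at least $3$.

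\textbf{Step 2 (surjectivity up to that obstruction).} Show that $\overline{\Gamma}_l$ has index exactly $1$ or $3$. I would exhibit explicit elements of $\Gamma$, namely the multitwists $T^{w}$ and $L^{w'}$ coming from horizontal and vertical cylinder decompositions with coprime moduli data. I would also use parabolic elements in further directions, such as $(1,1)$ and $(1,2)$, to generate enough of $\mathrm{SL}(2,\mathbb{Z}/p^k)$ for every prime power $p^k\,\|\, l$. By Goursat's lemma and the structure of $\mathrm{SL}(2,\mathbb{Z}/p^k)$, the image there is all of $\mathrm{SL}(2,\mathbb{Z}/p^k)$ for odd $p$, using that the image contains two non-commuting transvections for $p\ge5$ plus lifting arguments. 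For $p=2$ the image is either everything or the index-$3$ subgroup from Step 1. The remaining task is to glue across primes; since the $\mathrm{PSL}(2,p)$ are pairwise non-isomorphic simple groups for $p\ge5$, Goursat forces the full product, while $p=2,3$ are treated by hand.

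\textbf{Main obstacle.} The hard step is Step 2: producing enough explicit affine multitwists uniformly in $n$ from the cylinder diagrams of $H(2)$, which come in two types (one-cylinder and two-cylinder). I would first try to show that every orbit contains surfaces with one-cylinder decompositions in two transverse rational directions, giving transvections of controlled order. If that fails for small $n$, I would finish those cases by direct computer computation of $\overline{\Gamma}_l$.
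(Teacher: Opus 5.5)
The paper gives no proof of this statement; it quotes \cite[Theorem 1.3]{WeitzeSchmithuesen2015}. As a proof, your outline has a reasonable architecture: reduce to the images modulo the prime powers $p^k\,\|\,l$, glue them by Goursat, and explain the index $3$ by a $2$-torsion invariant. The gap is Step~2, which is the whole content of the theorem: it is only announced, and the mechanism you suggest fails exactly where it is needed. For a primitive $n$-square surface in $H(2)$, a one-cylinder direction has a single cylinder of height $1$ and circumference $n$. For $n\ge 4$ primitivity forces the three saddle connections to have unequal lengths, so no shear shorter than $T^{n}$ respects the gluing, and the cusp width is exactly $n$. Such directions exist on every surface in the orbit, so every prime $p\mid n$ divides $l$. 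Modulo any such $p$, your one-cylinder parabolics are $\equiv I$ and generate nothing. You would need, uniformly in $n$, other elements (two-cylinder twists or hyperbolic elements) whose reductions modulo each $p^k\,\|\,l$ you control. For $p=2,3$ you would also need control modulo $4$ or $8$, and modulo $9$, because surjectivity mod $p$ does not propagate there. For example, $\left(\begin{smallmatrix}1&3\\2&3\end{smallmatrix}\right)$ and $\left(\begin{smallmatrix}0&1\\3&3\end{smallmatrix}\right)$ generate a subgroup of order $12$ of $\mathrm{SL}(2,\mathbb{Z}/4)$ that maps onto $\mathrm{SL}(2,\mathbb{Z}/2)$. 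A computer check disposes of only finitely many $n$. The gluing across primes, by contrast, is harmless, since the local groups involved have no common nontrivial quotient.

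Step~1 also misplaces the index-$3$ obstruction. The spin parity is constant on $H(2)$, so it distinguishes nothing. The relevant invariant is the number of Weierstrass points over each $2$-torsion point of the torus; its value over $0$, and its multiset over the three nonzero points, are constant along the $\mathrm{SL}(2,\mathbb{Z})$-orbit. For $n$ odd the counts are $1$ over $0$ with $\{1,1,3\}$ elsewhere, or $3$ with $\{1,1,1\}$. So for odd $n$ this invariant obstructs only the orbit with one integer Weierstrass point, in line with your Step~1. For $n$ even, however, every count is even, and on a one-cylinder representative the multiset over the nonzero points is $\{2,0,0\}$ or $\{2,2,0\}$. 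Hence a nonzero $2$-torsion point is always singled out, and every even-$n$ Veech group lies in a conjugate of $\Gamma_0(2)$. For instance, take the four-square L-shaped surface: three unit squares in a row and one on top of the first. It has two integer Weierstrass points, none over $(\tfrac12,0)$, and two over each of $(0,\tfrac12)$ and $(\tfrac12,\tfrac12)$, so its Veech group lies in $\Gamma_0(2)$. Having a single orbit does not mean having no obstruction. The even case necessarily lands in the index-$3$ alternative, and any attempt in Step~2 to prove surjectivity modulo $2^k$ there would fail.
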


\begin{lemma}
\label{congruence_closure}
Let $\Gamma$ be the Veech group of any translation surface in ${H}(2)$. Then, the congruence closure $\Gamma^{c}$ is either $\mathrm{SL}(2,\mathbb{Z})$ or an index-$3$ subgroup of $\mathrm{SL}(2,\mathbb{Z})$.
\end{lemma}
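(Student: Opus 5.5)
The plan is to split according to the dichotomy of Theorem \ref{Wohlfahrt} and, in each case, identify $\Gamma^{c}$ through the closure $\overline{\Gamma}$ in $\mathrm{SL}(2,\hat{\mathbb{Z}})$. We use the identity $\overline{\Gamma}\cap \mathrm{SL}(2,\mathbb{Z}) = \Gamma^{c}$ recalled above. Equivalently, $\Gamma^{c}$ is the intersection of the preimages in $\mathrm{SL}(2,\mathbb{Z})$ of the images of $\Gamma$ in $\mathrm{SL}(2,\mathbb{Z}/N\mathbb{Z})$ over all $N$. Since this descending family stabilizes at a finite level for a finite-index subgroup, it is enough to know the image of $\Gamma$ modulo suitable $N$.

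\emph{Totally noncongruent case.} By \cite[Corollary 1.1]{WeitzeSchmithuesen2015}, the reduction $\Gamma \to \mathrm{SL}(2,\mathbb{Z}/N\mathbb{Z})$ is surjective for every $N$. Any congruence subgroup $\Delta \supseteq \Gamma$ contains some $\Gamma(N)$, and $\Delta$ is the full preimage of its image modulo $N$. That image contains the image of $\Gamma$, which is all of $\mathrm{SL}(2,\mathbb{Z}/N\mathbb{Z})$. Hence $\Delta=\mathrm{SL}(2,\mathbb{Z})$, and so $\Gamma^{c}=\mathrm{SL}(2,\mathbb{Z})$.

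\emph{Other case.} Here the image $\bar\Gamma_l$ of $\Gamma$ in $\mathrm{SL}(2,\mathbb{Z}/l\mathbb{Z})$ has index $3$, where $l$ is the Wohlfahrt level. I would use the standard Wohlfahrt characterization: for a finite-index $\Gamma$ with Wohlfahrt level $l$, $\Gamma$ is congruence iff $\Gamma\supseteq\Gamma(l)$. More to the point, $\Gamma^{c}=\Gamma\,\Gamma(l)$, which is the preimage of $\bar\Gamma_l$.

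The key step is to justify that the congruence closure is detected at level $l$. Suppose a congruence subgroup $\Delta\supseteq\Gamma$ has level $N$. The cusp widths of $\Delta$ divide those of $\Gamma$, so the Wohlfahrt level of $\Delta$ divides $l$. Wohlfahrt's theorem then says the level of $\Delta$ as a congruence group divides $l$, i.e. $\Delta\supseteq\Gamma(l)$, whence $\Delta\supseteq\Gamma\Gamma(l)$. Conversely, $\Gamma\Gamma(l)$ is itself congruence and contains $\Gamma$. Hence $\Gamma^{c}=\Gamma\Gamma(l)$, and
\[
[\mathrm{SL}(2,\mathbb{Z}):\Gamma^{c}]=[\mathrm{SL}(2,\mathbb{Z}/l\mathbb{Z}):\bar\Gamma_l]=3.
\]
The same argument reproves the first case, since there the index is $1$ (this matches $e_l=1$).

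I expect the main obstacle to be this divisibility of Wohlfahrt levels under inclusion $\Gamma\subseteq\Delta$. A cusp of $\Delta$ is the image of cusps of $\Gamma$. Its width in $\Delta$ is the least $k$ with $T^{k}$ in the relevant conjugate of $\Delta$ (up to $\pm$), and $T^{w}$ lies there whenever $w$ is a width for $\Gamma$. So the $\Delta$-width divides the $\Gamma$-width, and the lcm of the $\Delta$-widths divides $l$. This is where I would handle the $\pm I$ and irregular-cusp conventions carefully.
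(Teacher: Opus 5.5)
Your proposal is correct and takes essentially the same route as the paper, whose argument sits inside the proof of Lemma \ref{existence_noncongruence}: split by Theorem \ref{Wohlfahrt}, set $C=\rho_\ell^{-1}(\rho_\ell(\Gamma))=\Gamma\,\Gamma(\ell)$, and show that every congruence subgroup containing $\Gamma$ contains $\Gamma(\ell)$, hence contains $C$. Your cusp-width divisibility argument combined with Wohlfahrt's theorem justifies the step the paper only asserts (``since $\ell$ is the Wohlfahrt level of $\Gamma$, $\Gamma(\ell)\subseteq D$''), and your $\pm I$ concern does not arise here, because the hyperelliptic involution puts $-I$ in the Veech group of every surface in $H(2)$.
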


Let $\Gamma$ be a finite-index subgroup of $\mathrm{SL}(2,\mathbb{Z})$. A \textit{modular form} for $\Gamma$ of weight $2k$ is a holomorphic function $f$ on $\mathbb{H}$ satisfying, for all $\gamma = \begin{pmatrix} a & b \\
c & d
\end{pmatrix}$,
\begin{enumerate}
\item $f(\gamma z) = (cz + d)^{2k} \cdot f(z)$, for all $z \in \mathbb{H}$ (automorphy);
\item the function $(cz+d)^{-2k}f(\gamma z)$ is bounded for $\mathrm{Im}(z) \rightarrow \infty$ (growth condition).
\end{enumerate}
If in addition, for any $\gamma \in \Gamma$, we have $(cz+d)^{-2k}f(\gamma(z)) \rightarrow 0$ as $\mathrm{Im}(z) \rightarrow \infty$, we say $f$ is a $\textit{cusp form}$. We will denote the space of weight $2k$ modular forms by $M_{2k}(\Gamma,\mathbb{C})$ and the space of weight $2k$ cusp forms by $S_{2k}(\Gamma,\mathbb{C})$. It is a basic fact that both these spaces are finite-dimensional $\mathbb{C}$-vector spaces. There is a positive-definite Hermitian form on $S_{2k}(\Gamma,\mathbb{C})$, given by, for $f(z),g(z) \in S_{2k}(\Gamma,\mathbb{C})$

\begin{displaymath}
\langle f,g\rangle_{\Gamma}
=
\int_{\Gamma\backslash\mathbb{H}}
f(z)\overline{g(z)}\,(\operatorname{Im}z)^k
\frac{dx\,dy}{y^2}
\end{displaymath}
the $\textit{Petersson product}$. For $\Gamma$ noncongruence, let $\Gamma^{c} \supset \Gamma$ be the congruence closure of $\Gamma$ as defined above. Since $S_{2k}(\Gamma^{c}) \subseteq S_{2k}(\Gamma)$, we may take the orthogonal complement $S_{2k}(\Gamma^{c})^{\perp}$ with respect to the Petersson product and obtain the orthogonal decomposition
\begin{equation}
\label{orthogonal_decomposition}
S_{2k}(\Gamma) = S_{2k}(\Gamma^{c}) \oplus S_{2k}(\Gamma^c)^{\perp}
\end{equation}
and will refer to $S_{2k}(\Gamma^{c})^{\perp}$ as the space of \textit{genuine noncongruence cusp forms} for $\Gamma$. By the Eichler-Shimura isomorphism, if we denote the compactified modular curve for $\Gamma$ by $X_{\Gamma}$ (compactification of coarse scheme for $[\mathbb{H}/\Gamma]$), there is an isomorphism $S_2(\Gamma) \overset{\sim}{\rightarrow} H^{0}(X_{\Gamma},\Omega^1_{X_\Gamma/\mathbb{C}})$, where $\Omega^1_{X_{\Gamma}/\mathbb{C}}$ is the sheaf of holomorphic $1$-forms on $X_{\Gamma}$. \footnote{We remind the reader that when the $\Gamma$-action is torsion-free, Kodaira-Spencer theory gives an isomorphism $\omega^{\otimes2} \cong \Omega^{1}_{X_{\Gamma}/\mathbb{C}}(D)$  where $\omega$ is a  line bundle on $X_{\Gamma}$ and $D$ is the cuspidal divisor. The space of sections of $\omega^{\otimes 2k}$ is isomorphic to $M_{2k}(\Gamma,\mathbb{C})$. Further, by Eichler-Shimura theory, $S_{2k}(\Gamma) \cong H^{0}(X_{\Gamma}, \omega^{\otimes 2k-2} \otimes \Omega^1_{X_{\Gamma}/\mathbb{C}})$ can be identified with a parabolic cohomology group of $\Gamma$. We do not have anything to say about higher weight cusp forms at present.}

\begin{lemma}
\label{existence_noncongruence}
Let $\Gamma$ be the Veech group of any translation surface in $H(2)$, and $f$ a weight-2 modular cusp form for $\Gamma$. Then, $f$ is a genuine noncongruence cusp form.
\end{lemma}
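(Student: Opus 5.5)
The plan is to show that $S_2(\Gamma^{c})=0$. Once this holds, the orthogonal decomposition (\ref{orthogonal_decomposition}) gives $S_2(\Gamma)=S_2(\Gamma^c)^{\perp}$, so every weight-2 cusp form $f$ for $\Gamma$ is a genuine noncongruence cusp form. (The statement is vacuously true if $S_2(\Gamma)=0$.)

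By Lemma \ref{congruence_closure}, $\Gamma^c$ is either $\mathrm{SL}(2,\mathbb{Z})$ or an index-$3$ subgroup of $\mathrm{SL}(2,\mathbb{Z})$. In both cases we argue with the Eichler--Shimura identification $S_2(\Gamma^c)\cong H^0(X_{\Gamma^c},\Omega^1)$. This reduces the claim to showing that the compactified modular curve $X_{\Gamma^c}$ has genus $0$.

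\textbf{Case $\Gamma^c=\mathrm{SL}(2,\mathbb{Z})$.} The curve $X(1)$ is the $j$-line and has genus $0$. Equivalently, there are no weight-2 cusp forms of level one, by the valence formula. So $S_2(\Gamma^c)=0$.

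\textbf{Case $[\mathrm{SL}(2,\mathbb{Z}):\Gamma^c]=3$.} We use the Riemann--Hurwitz genus formula for $X_{\Gamma^c}\to X(1)$:
\[
g = 1+\frac{\mu}{12}-\frac{\nu_2}{4}-\frac{\nu_3}{3}-\frac{\nu_\infty}{2},
\]
where $\mu$ is the index in $\mathrm{PSL}(2,\mathbb{Z})$. Since $-I$ acts trivially on $\mathbb{H}$, we may pass to the image in $\mathrm{PSL}(2,\mathbb{Z})$, so $\mu\le 3$. We also have $\nu_\infty\ge1$ and $\nu_2,\nu_3\ge 0$. Hence
\[
g\le 1+\tfrac{3}{12}-\tfrac12<1,
\]
and therefore $g=0$.

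A more robust alternative avoids the formula altogether: $X_{\Gamma^c}$ covers $X(1)\cong\mathbb{P}^1$ with degree at most $3$. Its ramification lies only over the three points $j=0$, $j=1728$ and $\infty$, with ramification bounded by the cusp width and elliptic orders. Either way, $H^0(X_{\Gamma^c},\Omega^1)=0$, so $S_2(\Gamma^c)=0$.

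The main obstacle is bookkeeping rather than a conceptual difficulty. First, one must make sure that $\Gamma^c$ actually contains $\Gamma$, so that the inclusion $S_2(\Gamma^c)\subseteq S_2(\Gamma)$ used in (\ref{orthogonal_decomposition}) is legitimate; this holds by construction of the congruence closure. Second, one must handle the possible absence of $-I$ in the groups correctly. Neither issue affects the genus bound, since the index in $\mathrm{PSL}(2,\mathbb{Z})$ is at most $3$ in every case.
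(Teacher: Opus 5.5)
Your proof is correct and has the same skeleton as the paper's: show $X_{\Gamma^c}$ has genus $0$ in both cases, so $S_2(\Gamma^c)=0$, and then (\ref{orthogonal_decomposition}) gives $S_2(\Gamma)=S_2(\Gamma^c)^{\perp}$. It differs from the paper in two places worth recording.

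\textbf{Where the index-$3$ dichotomy comes from.} You import ``$\Gamma^c=\mathrm{SL}(2,\mathbb{Z})$ or index $3$'' from Lemma \ref{congruence_closure}, which the paper states without a separate proof. The paper actually derives it inside this proof, starting from Theorem \ref{Wohlfahrt}. Total noncongruence forces $\Gamma^c=\mathrm{SL}(2,\mathbb{Z})$. In the other case, every congruence subgroup containing $\Gamma$ contains $\Gamma(\ell)$ for the Wohlfahrt level $\ell$, so $\Gamma^c=\rho_\ell^{-1}(\rho_\ell(\Gamma))$ has index $3$. Your argument is shorter only because this group-theoretic step is delegated to the cited lemma.

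\textbf{How the index-$3$ case is closed.} Here your route is better. The paper asserts that $\Gamma_0(2)$ is the unique index-$3$ subgroup of $\mathrm{SL}(2,\mathbb{Z})$, which is false. Its conjugates $\Gamma^0(2)$ and the theta group have index $3$, and so does the normal index-$3$ subgroup of level $3$. All of these have genus $0$, so the paper's conclusion survives. Your uniform bound $g\le 1+\tfrac{3}{12}-\tfrac12<1$ holds for every subgroup of index at most $3$, so it closes the case correctly without identifying $\Gamma^c$.

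One caution on your fallback Riemann--Hurwitz sketch: the elliptic bound $e\le 2$ over $j=1728$ is indispensable there. Without it you only get $g\le 1$, since a cyclic triple cover of $\mathbb{P}^1$ totally ramified over three points has genus $1$.
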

\begin{proof}
By Lemma \ref{Wohlfahrt}, there are two cases to consider; either $\Gamma$ is totally noncongruence or if $l$ is the Wohlfahrt level of $\Gamma$, the index of $\Gamma$ in $\mathrm{SL}(2,\mathbb{Z}/l\mathbb{Z})$ is $3$. \\

\textbf{Case (a)} (totally noncongruence) If $\Gamma$ is totally noncongruence, then $\Gamma^{c} = \mathrm{SL}(2,\mathbb{Z})$, but by Riemann-Roch and the identification of weight $2$ cusp forms and holomorphic $1$-forms above
\begin{displaymath}
\mathrm{dim}_{\mathbb{C}} S_2(\mathrm{SL}(2,\mathbb{Z}) = 0
\end{displaymath}
and by $(\ref{orthogonal_decomposition})$, $f \in S_2(\Gamma^{c})^{\perp}$.

\textbf{Case (b)} We first claim that if $\Gamma$ is not totally noncongruence, the index $[\mathrm{SL}(2,\mathbb{Z}):\Gamma^{c}] = 3$. Let $\rho_\ell:\mathrm{SL}(2,\mathbb{Z})\to
\mathrm{SL}(2,\mathbb{Z}/\ell\mathbb{Z})$ be reduction modulo $\ell$, and let
$H=\rho_\ell(\Gamma)$. Set $C=\rho_\ell^{-1}(H)$. Then
$$
[\mathrm{SL}(2,\mathbb{Z}):C]
=
[\mathrm{SL}(2,\mathbb{Z}/\ell\mathbb{Z}):H]
=3.
$$ We claim that $C$ is the congruence closure of $\Gamma$. Indeed, let $D$ be any
congruence subgroup with $\Gamma\subseteq D\subseteq C$. Since $\ell$ is the
Wohlfahrt level of $\Gamma$, $\Gamma(\ell)\subseteq D$. Let $g \in C$. By definition of $C$, $\rho_l(g) \in \rho_l(\Gamma)$ so there exists $\gamma \in \Gamma$ such that $\rho_l(g) = \rho_l(\gamma)$, and therefore $g\gamma^{-1} \in \Gamma(l)$, whence $g \in D$. Thus $C$ is the congruence closure of $\Gamma$, and hence $[\mathrm{SL}(2,\mathbb{Z}):\Gamma^{\mathrm{c}}]=3$. On the other hand, the unique index-$3$ subgroup in $\mathrm{SL}(2,\mathbb{Z})$ is
\begin{displaymath}
\Gamma_0(2)
=
\left\{
\begin{pmatrix}
a & b\\
c & d
\end{pmatrix}
\in \mathrm{SL}(2,\mathbb{Z})
:\ c \equiv 0 \pmod{2}
\right\}.
\end{displaymath}
By the Riemann-Hurwitz formula, the compactified quotient of $\mathbb{H}$ by this subgroup has genus $0$, so again $\mathrm{dim}_{\mathbb{C}}S_2(\Gamma^{c}) = 0$, and $f \in S_2(\Gamma^{c})^{\perp}$.

\end{proof}

We now recall a few arithmetic properties of Teichm\"{u}ller curves in the genus $2$ stratum of Abelian differentials with a single double zero, which we had denoted by $H(2)$, and weight $0$ modular forms (modular functions) on the corresponding quotient $\mathbb{H}/\Gamma$, $\Gamma$ the Veech group of the Teichm\"{u}ller curve.  We will, as above, restrict to those with Veech group a finite index subgroup of $\mathrm{SL}(2,\mathbb{Z})$. In \cite{mukamel2021polynomials}, Mukamel studies natural arithmetic models for such Teichm\"{u}ller curves in $H(2)$. These modular curves have also been intensively studied from points of view beyond Theorem \ref{Wohlfahrt}; for many more details on their geometric construction, basic properties, relationship to Teichm\"{u}ller dynamics and to Hilbert modular surfaces, see \cite{McMullen2005Teichmuller}. We summarize what we need below.
\\

Recall from the introduction that discriminants $D = d^2$ of quadratic orders $\mathcal{O}_{d^2}$ (in the split quadratic algebra over $\mathbb{Q}$), there is any infinite family of algebraic curves indexed by $d \geq 3$, $W_{d^2}$, each lying in the Hilbert modular surface $X_{d^2}$, a surface which parametrizes principally polarized abelian surfaces with multiplication by $\mathcal{O}_{d^2}$. Each curve $W_{d^2}$ parametrizes genus-two curves admitting a primitive degree-$d$ map to an elliptic curve with a single critical point. Each irreducible component $W \subset W_{d^2}$ is uniformized by a finite index subgroup of $\mathrm{SL}(2,\mathbb{Z})$, $\Gamma$, and the algebraic immersion $\mathbb{H}/\Gamma \rightarrow \mathcal{M}_2$ is a local isometry for the Kobayashi metric on the moduli space of genus $2$ surfaces $\mathcal{M}_2$. In other words, the image is a \textit{Teichm\"{u}ller curve}. By the realization of $W_{d^2}$ as a moduli problem, if ($\phi: Y \to E$) is such a genus $2$ cover of an elliptic curve, the induced map on Jacobians has kernel an elliptic curve $\mathrm{ker}_*(\phi)$, thus there is a natural map

\begin{displaymath}
W\longrightarrow \mathcal M_1\times\mathcal M_1,\qquad
(Y,\phi)\longmapsto (E,\mathrm{ker}_*(\phi)).
\end{displaymath}
Using the elliptic $j$-invariant, we then have a map

\begin{displaymath}
\psi: W \longrightarrow \mathbb C^2,\qquad (\phi:Y \rightarrow E) \longmapsto (j(E),j(\mathrm{ker}_{*}(\phi)).
\end{displaymath}
Letting $j_1$, $j_2$, respectively, be the $j$-invariant of the base elliptic curve, and the $j$-invariant of the kernel of $\phi$, the \textit{image} of the above map, $\psi(W)$, is an algebraic curve in the $(j_1,j_2)$-plane. Mukamel constructs an arithmetic model of this curve as a zero locus in $\operatorname{Spec}\mathbb Z[j_1,j_2]$, obtaining a scheme $V_W$ whose generic characteristic-zero fiber is $\psi(W)$. Finally, let $\overline{W}$ denote the smooth, projective curve birational to $W$. By \cite[Proposition 2.5]{mukamel2021polynomials}, the curve $V_W$ is shown to be birational to $W$ by extending $\psi: W \rightarrow \mathbb{C}^2$ to a map $\overline{W} \rightarrow \mathbb{P}^1 \times \mathbb{P}^1$, and using his classification of cusps \cite[Propositions A.7 and A.8]{mukamel2021polynomials}.

\begin{prop}
\label{rational_point}
If $d \geq 3$ and $d \equiv 0 \ \mathrm{mod} \ 4$ or $d \equiv 2 \ \mathrm{mod} \  4$, $\overline{W}_{d^2}$ has a $\mathbb{Q}$-rational cusp.
\end{prop}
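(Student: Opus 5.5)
We propose to extract the rational cusp directly from Mukamel's classification of the cusps of $W_{d^2}$ \cite[Propositions A.7 and A.8]{mukamel2021polynomials}, together with the birational model $V_W \subset \operatorname{Spec}\mathbb{Z}[j_1,j_2]$ and its extension $\overline{W} \to \mathbb{P}^1\times\mathbb{P}^1$ from \cite[Proposition 2.5]{mukamel2021polynomials}.

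First, cusps of $\overline{W}_{d^2}$ correspond to periodic (Jenkins--Strebel) directions on the square-tiled surfaces in $H(2)$ generating $W_{d^2}$. Equivalently, they correspond to cylinder diagrams in $H(2)$ with prescribed integer widths, heights and twists subject to the degree-$d$ condition. In $H(2)$ there are only two cylinder diagrams, with one or two cylinders. Mukamel's classification lists these by combinatorial data, called prototypes, and for each records the image of the cusp under $\psi$. The image is a point of $\mathbb{P}^1\times\mathbb{P}^1$ with $j_1=\infty$ or $j_2=\infty$, where the stable limit degenerates, together with data describing the degenerate limit.

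Second, we use the parity of $d$. We plan to take $d$ even (both $d\equiv 0$ and $d\equiv 2 \bmod 4$) and select a one-cylinder (or symmetric two-cylinder) prototype that exists only when $d$ is even. A candidate is a single horizontal cylinder of width $d$ and height $1$ with twist $d/2$, or a two-cylinder diagram with widths $d/2$ and $d/2$. The claim is that $\operatorname{Gal}(\overline{\mathbb{Q}}/\mathbb{Q})$ fixes the corresponding cusp. To see this we would argue that the cusps of $\overline{W}$ over a given point of the boundary of $\mathbb{P}^1\times\mathbb{P}^1$ are permuted by Galois compatibly with the stable-curve degeneration type. The stable limit at such a cusp is a nodal curve built from rational curves glued at points that depend only on rational combinatorial data (roots of unity of order dividing $2$ when the twist is $d/2$). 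Hence the fiber of the Deligne--Mumford model over that cusp is defined over $\mathbb{Q}$ and is distinguished from all other cusps by its combinatorial invariants (number of cylinders and moduli ratios). A Galois-invariant isolated point of $\overline{W}(\overline{\mathbb{Q}})$ is $\mathbb{Q}$-rational, given that $\overline{W}$ itself is defined over $\mathbb{Q}$. That $\overline{W}$ is defined over $\mathbb{Q}$ follows from $V_W$ being cut out over $\mathbb{Z}$, and from irreducibility of $W_{d^2}$ for the relevant $d$ or a Galois-stable component.

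The main obstacle is the uniqueness step: showing that no other cusp shares the same Galois-invariant data. Otherwise Galois might permute a pair of such cusps. To handle this, we would first check in Mukamel's tables that the chosen prototype occurs exactly once for $d$ even. As a fallback, we would use the explicit local branches of $V_W$ at $(\infty,j_2)$: if the $j_2$-coordinate of the cusp is rational (for instance $j_2 = 1728$ or $\infty$) and the local branch of $V_W$ there is unique, then the corresponding point of the normalization is rational.
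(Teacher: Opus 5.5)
\noindent There is a genuine gap at the Galois-invariance step, which is where all the content of the statement lies.

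\noindent\textbf{What Galois can be shown to preserve.} The invariants you can show are preserved by $\sigma\in\mathrm{Gal}(\overline{\mathbb{Q}}/\mathbb{Q})$ are algebraic ones:
the image $\psi(w)$, when it is a $\mathbb{Q}$-point;
the orders $\mathrm{ord}_w(j_1)$ and $\mathrm{ord}_w(j_2)$ of the $\mathbb{Q}$-rational functions $j_1,j_2$ on $\overline{W}$.
From these you get the cusp width $-\mathrm{ord}_w(j_1)$, and the number of cylinders, since the two-cylinder cusps are exactly the cusps with $j_2=\infty$. Twists, heights and moduli are flat-geometric data, and nothing in your argument shows that Galois conjugation respects them.

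\noindent\textbf{Why your argument does not give invariance.} Knowing that the stable fiber is defined over $\mathbb{Q}$ does not help. The stable limit forgets heights and twists; it only sees the circumferences and how the saddle connections are glued. So distinct cusps can have isomorphic stable curves. Also, at a one-cylinder cusp the stable curve has geometric genus $1$, not a configuration of rational curves.

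\noindent\textbf{Your candidate cusps are not well defined.} For a one-cylinder decomposition of height $1$, the parabolic $T$ shifts the twist by $1$. So all twists lie in the same cusp, and ``twist $d/2$'' is not a cusp invariant. In a two-cylinder diagram in $H(2)$ the cylinders always have different circumferences, so widths $d/2,d/2$ never occur.

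\noindent\textbf{The role of parity.} The parity hypothesis is not there to produce a special prototype. In the paper it enters through McMullen's irreducibility of $W_{d^2}$. Indeed, $W_D$ has two components exactly when $D\equiv 1 \bmod 8$ and $D>9$, i.e.\ here when $d$ is odd. Irreducibility is what makes $\overline{W}$ a geometrically irreducible curve over $\mathbb{Q}$ with $j_1,j_2\in\mathbb{Q}(\overline{W})$.

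\noindent\textbf{Your fallback is the paper's mechanism.} The paper applies it at the $\mathbb{Q}$-point $P=(\infty,\infty)$:
it identifies $\overline{W}$ with the normalization of the closure of $\psi(W)$ in $\mathbb{P}^1\times\mathbb{P}^1$, via Mukamel's Proposition 2.5;
the preimages of $P$ are precisely the two-cylinder cusps;
Galois permutes these preimages, preserving $(\mathrm{ord}_w j_1,\mathrm{ord}_w j_2)$;
so a preimage that is alone with its data is Galois-fixed, hence $\mathbb{Q}$-rational.
For the required uniqueness, the paper asserts, citing Mukamel's Propositions A.7--A.8, that there is a unique two-cylinder cusp.

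You leave this step as a contingency, without naming the point or proving any uniqueness. As written, the proposal does not prove the statement.

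When you carry it out, aim for a cusp that is unique with its invariants, rather than for uniqueness of two-cylinder cusps as such. Already for $d=4$ there are two two-cylinder cusps:
a cylinder of circumference $1$ and height $1$ over one of circumference $3$ and height $1$;
a cylinder of circumference $1$ and height $2$ over one of circumference $2$ and height $1$.
These are separated by their widths $3$ and $2$, i.e.\ by $\mathrm{ord}_w(j_1)$, so each is still Galois-fixed.
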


\begin{proof}

First, by the classification in \cite{McMullen2005Teichmuller}, $W_{d^2}$ is irreducible, so in the subsequent proof we set $W = W_{d^2}$. Let $C\subset \mathbf P^1_{\mathbf Q}\times\mathbf P^1_{\mathbf Q}$ be the projective closure of the affine curve $\psi(W)$. By \cite[Proposition 2.5]{mukamel2021polynomials}, $C$ is birational to $W$. Recall we denoted by $\overline{W}$ the smooth projective model of $W$, and we now let $\widetilde C$ denote the normalization of $C$. The birational map of \cite[Proposition 2.5]{mukamel2021polynomials} induces an isomorphism of function fields $\mathbf Q(\overline{W})\cong\mathbf Q(\widetilde C)$, hence an isomorphism $\overline{W} \cong\widetilde C$. Consider the $\mathbf Q$-rational point $P=(\infty,\infty)\in\mathbf P^1\times\mathbf P^1$. A branch of $C$ at $P$ is simply a point of the normalization $\widetilde C$ lying above $P$. Since $\widetilde C\cong \overline{W}$, these branches are equivalently the points of $\overline{W}$ at which both $j_1$ and $j_2$ have poles. By the discussion in the \cite[Appendix A]{mukamel2021polynomials}, these branches are identified with the two-cylinder cusps of $W_{d^2}$. Thus the two-cylinder cusps are precisely the points $w \in \overline{W}(\overline{\mathbf Q})$ satisfying $\psi(w)=(\infty,\infty)$, where we mildly abuse notation by denoting the extended map also by $\psi:\overline{W}\to\mathbf P^1\times\mathbf P^1$. By the cusp classification \cite[Propositions A.7 and A.8]{mukamel2021polynomials}, there is a unique two-cylinder cusp. Denote the corresponding point of $\overline{W}(\overline{\mathbf Q})$ by $w$ .We now use the $\mathbf Q$-structure. The morphism $\psi$ is defined over $\mathbf Q$, and $P=(\infty,\infty)$ is a $\mathbf Q$-rational point. Hence, for every $\sigma\in\operatorname{Gal}(\overline{\mathbf Q}/\mathbf Q)$, we have $\psi(\sigma w)=\sigma(\psi(w))=\sigma(P)=P$. Thus $\sigma w$ is another geometric point of $\overline{W}$ lying above $P$. Moreover, the branch corresponding to $\sigma w$ has the same local pole-order data as the branch corresponding to $w$. Indeed, because $j_1,j_2\in\mathbf Q(\overline{W})$, the Galois action preserves their valuations: $\operatorname{ord}_{\sigma x}(j_i)=\operatorname{ord}_x(j_i)$ for $i=1,2$ (in the notation of \cite[Appendix A]{mukamel2021polynomials}, this means that $\sigma w$ has the same pair $(v_1,v_2)$ as $w$). Hence $\sigma w$ is again the two-cylinder cusp corresponding to the same branch type. Since the two-cylinder cusp is unique, it follows that $\sigma w=w$ for every $\sigma\in\operatorname{Gal}(\overline{\mathbf Q}/\mathbf Q)$. Therefore the residue field $\kappa(w)$ is fixed by $\operatorname{Gal}(\overline{\mathbf Q}/\mathbf Q)$. Equivalently, $\kappa(w)=\mathbf Q$, so $w\in \overline{W}(\mathbf Q)$. Thus the unique two-cylinder cusp of $W_{d^2}$ is $\mathbf Q$-rational.

\end{proof}

\section{Main theorem proofs}

We remind the reader of our notation in Section \ref{Wohlfahrt}; for $\Gamma \in \mathrm{SL}(2,\mathbb{Z})$ finite index, $X_{\Gamma}$ is the compactification of the coarse scheme of $[\mathbb{H}/\Gamma]$. When $f \in M_k(\Gamma,\mathbb{C})$ and $c$ is a cusp of $\Gamma$ width $h$, we say $f = \sum\limits_{n \geq 0}a(n)q^{n/h}$ is a $q$-expansion of $f$ at $c$ if $q = e^{2\pi i \gamma(z)}$ and $\gamma(c) = \infty$, $z$ is a local coordinate at $i\infty$. The following is originally due to Atkin Swinnerton-Dyer \cite{atkin1971modular}, with at least three other different perspectives on the this congruence, due to Katz \cite{katz1981crystalline}, Scholl \cite{Scholl1985} \footnote{Theorem 5.4 in \cite{Scholl1985} gives a more general result. In particular, a consequence of Theorem 5.4 and modularity is the following. Suppose $X_\Gamma$ has a $\mathbb{Q}$-model, a $\mathbb{Q}$-rational point at $\infty$, $k\geq 2$, $S_k(\Gamma)$ is one-dimensional, and $f$ is nonzero and has rational Fourier coefficients $a(n)$. There is a normalized newform $g$ of weight $k$, level $N$ and real Dirichlet character $\chi$ such that for good primes $p$ the $p$-th Hecke eigenvalues of $g$, $b(p)$, and $a(n)$ satisfy $a(np) -b(p)a(n) + \chi(p)p^{k-1}a(n/p) \equiv 0 \ \mathrm{mod} \ p^{(k-1)(1 + \mathrm{ord}_p(n))}$. } and Kibelbek \cite{Kibelbek2014ASD}.



\begin{theorem}[c.f. \cite{atkin1971modular}, \cite{Scholl1985},
\cite{katz1981crystalline}, 
\cite{Kibelbek2014ASD}]
\label{katz-scholltheorem}
Let $\Gamma$ be a finite-index subgroup of \ $\mathrm{SL}(2,\mathbb{Z})$, and assume the associated modular curve $X_{\Gamma}$ is genus $1$, defined over $\mathbb{Q}$, and has a $\mathbb{Q}$-rational point at the cusp $\infty$. There is a weight $2$ cusp form on $X_{\Gamma} \simeq E$, such that the (normalized) holomorphic differential $1$-form on the elliptic curve $E$, $f \frac{dq}{q} = \sum\limits_{n \geq 1}a(n)q^n\frac{dq}{q}$ satisfies the congruence relation

\begin{equation}
\label{Katz-Scholl}
a(np^r) - [p+1 - \#E(\mathbb{F}_p)]a(np^{r-1}) + pa(np^{r-2}) \equiv 0 \ \mathrm{mod} \ p^r
\end{equation}
for all primes $p$ not dividing the conductor of $E$, and $n \geq 1$.
\end{theorem}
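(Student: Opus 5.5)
Since $X_\Gamma$ has genus $1$, the cusp form is forced: by the Eichler--Shimura identification $S_2(\Gamma)\cong H^0(X_\Gamma,\Omega^1_{X_\Gamma/\mathbb{C}})$, the space $S_2(\Gamma)$ is one-dimensional. The plan is to take the generator, normalize it using the rational structure, and read off the recursion from the action of Frobenius on the first crystalline (or formal group) cohomology of $E$.

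\emph{Choosing the form.} Let $E$ be the $\mathbb{Q}$-model of $X_\Gamma$, made an elliptic curve by taking the rational cusp $\infty$ as origin. Choose a global Néron differential $\omega$ on a minimal Weierstrass model of $E$ over $\mathbb{Z}$. Pulling back through $X_\Gamma\simeq E$ gives $\omega = f\,\frac{dq}{q}$ for a unique $f\in S_2(\Gamma)$, where $q=e^{2\pi i z/h}$ is the local parameter at $\infty$. Because the cusp is $\mathbb{Q}$-rational and $X_\Gamma$ is defined over $\mathbb{Q}$, the function $q$ can be taken to be a rational uniformizer at $\infty$, after rescaling $q\mapsto cq$ if needed. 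Then $f$ has $\mathbb{Q}$-coefficients. Finally, scale $\omega$ so that $a(1)=1$. This changes the coefficients only at finitely many primes, which is harmless if we also enlarge the set of excluded primes.

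\emph{The key identification and the recursion.} The crucial step is to identify $q$, up to a unit in $\mathbb{Z}_{(p)}[[q]]$, with a formal parameter $t$ of $\widehat{E}$ at the origin defined over $\mathbb{Z}_{(p)}$. Then $\omega=\sum_{n\ge1} a(n)q^{n-1}\,dq$ is an integral invariant differential in the parameter $q$ for all but finitely many $p$, and its formal logarithm is $\ell(q)=\sum_n \frac{a(n)}{n}q^n$.

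Next apply Honda's theory, as in Atkin--Swinnerton-Dyer and Katz. For $p$ of good reduction, $\widehat{E}$ is strictly isomorphic over $\mathbb{Z}_p$ to the formal group whose logarithm satisfies the functional equation attached to $T^2-a_pT+p$, where $a_p=p+1-\#E(\mathbb{F}_p)$. Equivalently, Frobenius $\phi$ on $H^1_{\mathrm{cris}}(E/\mathbb{Z}_p)$ satisfies $\phi^2-a_p\phi+p=0$.

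The operator $\sum b(n)q^n\mapsto \sum b(np)q^n$ computes the action of Frobenius on the differential modulo exact forms. Applying $\phi^2-a_p\phi+p$ to $\omega$ therefore gives an exact differential $dg$ with $g$ having $p$-integral coefficients. This is the content of Katz's crystalline argument: the characteristic relation kills $\omega$ in $H^1_{\mathrm{dR}}$ over $\mathbb{Z}_p$ up to exact forms with integrality. Comparing coefficients of $q^{np^{r}}$ in $\frac{1}{np^{r-1}}\times$ (the exactness) then yields the congruence \eqref{Katz-Scholl}. It comes out first in the form $a(mp)-a_pa(m)+pa(m/p)\equiv 0 \bmod p^{\mathrm{ord}_p(m)+1}$, and setting $m=np^{r-1}$ gives \eqref{Katz-Scholl}.

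\emph{Main obstacle.} The delicate step is integrality at $p$ of the cusp parameter, namely showing that $q$ is a formal parameter over $\mathbb{Z}_{(p)}$ for almost all $p$. For noncongruence $\Gamma$ there is no moduli-theoretic $q$-expansion principle, so this must be argued directly. I would argue it via the algebraicity of $q$ as a uniformizer of a smooth $\mathbb{Z}[1/M]$-model at the section $\infty$. This only holds after discarding finitely many primes, which matches the finitely many exceptions allowed by the bounded-denominator phenomenon. Beyond this, I would cite Scholl's Theorem 5.4 (for one-dimensional $S_2$) as giving exactly this statement, with $b(p)=a_p$ by modularity.
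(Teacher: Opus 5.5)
The paper gives no proof of this statement. It is quoted from Atkin--Swinnerton-Dyer, Katz, Scholl and Kibelbek, and the footnote derives it from Scholl's Theorem~5.4 plus modularity, which is exactly your fallback. (In weight $2$ modularity is not even needed, since Scholl's $A_p$ is already the trace of Frobenius on $H^1(E)$.)

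\textbf{The self-contained sketch loses a power of $p$.} Suppose $(\phi^2-a_p\phi+p)\omega=dg$, with $\phi$ the pullback along $q\mapsto q^p$ and $g=\sum g_mq^m\in\mathbb{Z}_p[[q]]$. Comparing coefficients of $q^{np^r}\,dq/q$ gives $p\bigl(a(np^r)-a_pa(np^{r-1})+pa(np^{r-2})\bigr)=np^r g_{np^r}$. That is only a congruence modulo $p^{r-1}$ when $p\nmid n$. The operator $\sum b(n)q^n\mapsto\sum b(np)q^n$ that you call Frobenius is actually Verschiebung $p\phi^{-1}$, and with it mere integral exactness loses two powers.

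What you need is $g\in p\,\mathbb{Z}_p[[q]]$. In Honda's language this is the type condition $p\ell(q)-a_p\ell(q^p)+\ell(q^{p^2})\in p\,\mathbb{Z}_p[[q]]$. It transfers to the parameter $q$ because Honda type is preserved by strict isomorphisms over $\mathbb{Z}_p$. In crystalline terms it comes from $\omega\in\mathrm{Fil}^1$: then $\phi(\omega)=p\eta$ with $\eta\in H^1_{\mathrm{cris}}$. Torsion-freeness lets you divide Cayley--Hamilton by $p$ to get $\phi\eta-a_p\eta+\omega=0$ \emph{before} passing to expansions, where $\eta\mapsto\sum a(n)q^{pn}\,dq/q$. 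Integral exactness of that expansion is exactly \eqref{Katz-Scholl}.

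\textbf{The integrality of $q$ needs a different justification.} $q$ is transcendental over the function field, so ``algebraicity of $q$'' cannot be the mechanism. What one uses is $e^{2\pi i z}\in\mathbb{Z}[[1/j]]$ together with $1/j$ being a $\mathbb{Q}$-rational function on the model. So the $\mathbb{Q}$-structure must be the one for which $X_\Gamma\to X(1)$ is defined over $\mathbb{Q}$, a hypothesis your sketch never invokes.

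Write $1/j=u\,t^h$, with $t$ a parameter of a smooth $\mathbb{Z}[1/M]$-model along the cusp section, enlarging $M$ so that $u(0)$ is a unit. Then $q=\kappa\,t\,w(t)^{1/h}$ with $\kappa^h=u(0)$ and $w\in 1+t\,\mathbb{Z}[1/M][[t]]$. Extracting the $h$-th root integrally needs $p\nmid h$, and $\kappa$ is in general only algebraic.

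\textbf{Consequently the prime restriction cannot be dropped.} Your argument, like ASD's and Scholl's, gives the congruence only for $p$ outside a finite set containing the divisors of $Mh$, not for every $p\nmid N_E$ as stated. Keep that restriction explicit. For example, take the Fermat cubic $X_{\Phi(3)}$, with $x^3=\lambda$, $y^3=1-\lambda$, conductor $27$ and cusp width $6$. Using $\omega=\tfrac13\lambda^{-2/3}(1-\lambda)^{-2/3}\,d\lambda$ and $\lambda=16s^3-128s^6+\cdots$ with $s=e^{2\pi i z/6}$, one gets $a(1)=1$ and $a(2)=a(3)=a(4)=0$ in every normalization of the parameter, while $A_2=0$. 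Hence $a(4)-A_2a(2)+2a(1)=2\not\equiv 0\pmod 4$ at the good prime $p=2$.
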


\begin{theorem}[c.f. \cite{selberg1965}]
Let $f(z) = \sum\limits_{n \geq 1}a(n) q^{n/h}$ be the Fourier expansion of a weight $k$ cusp form at a cusp of width $h$, for any finite-index subgroup of $\mathrm{SL}(2,\mathbb{Z})$. Then, there exists a constant $C$, depending only on $f$, such that
\begin{equation}
\label{Selberg}
|a(n)| < Cn^{k/2 - 1/5} \ for \ all \ n\geq 1.
\end{equation}
\end{theorem}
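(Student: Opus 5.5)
The plan is to reduce to Poincaré series at the cusp in question and then bound their Fourier coefficients through the associated Kloosterman-type sums. The Hecke bound $a(n)=O(n^{k/2})$ follows from $|f(z)|y^{k/2}$ being bounded on $\mathbb{H}$, which holds for any finite-index $\Gamma$. The whole content of the statement is the extra saving $n^{-1/5}$, and that saving has to come from cancellation in those sums.

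\textbf{Step 1: reduction to the cusp at $\infty$.} Conjugate by an element $\sigma\in\mathrm{SL}(2,\mathbb{Z})$ with $\sigma(\infty)=c$, replacing $\Gamma$ by $\sigma^{-1}\Gamma\sigma$ and $f$ by $f|_k\sigma$. The conjugate is again of finite index in $\mathrm{SL}(2,\mathbb{Z})$, the cusp width $h$ is unchanged, and so is the expansion $\sum a(n)q^{n/h}$. So we may assume the cusp is $\infty$ with stabilizer generated by $\pm\begin{pmatrix}1&h\\0&1\end{pmatrix}$.

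\textbf{Step 2: Poincaré series and their coefficients.} For $k>2$, the Poincaré series
\[
P_m(z)=\sum_{\gamma\in\Gamma_\infty\backslash\Gamma}(cz+d)^{-k}e^{2\pi i m\gamma z/h}, \qquad m\ge 1,
\]
span $S_k(\Gamma)$. Since the space is finite-dimensional, finitely many of them suffice, so $f$ is a fixed finite linear combination of $P_{m_1},\dots,P_{m_r}$. It therefore suffices to bound the $n$-th coefficient of each $P_m$ with $m$ fixed. The Petersson/Lipschitz computation gives
\[
p_m(n)=\delta_{mn}+2\pi i^{-k}\Big(\frac{n}{m}\Big)^{\frac{k-1}{2}}\sum_{c>0}\frac{K_\Gamma(m,n;c)}{hc}\,J_{k-1}\!\Big(\frac{4\pi\sqrt{mn}}{hc}\Big),
\]
where $K_\Gamma(m,n;c)$ sums $e\big((ma+nd)/(hc)\big)$ over double cosets $\Gamma_\infty\backslash\{\gamma\in\Gamma\colon \text{lower-left entry } c\}/\Gamma_\infty$. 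For weight $2$, which is the case used later, I would use the standard Hecke-convergence or analytic-continuation version of the same formula.

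\textbf{Step 3: splitting the sum over $c$.} Split the $c$-sum at $c\asymp\sqrt{n}$, using $J_{k-1}(x)\ll\min(x^{k-1},x^{-1/2})$.
\begin{itemize}
\item The range $c\gg\sqrt n$ contributes $O(n^{k/2-1/2+\epsilon})$, even with the trivial bound $|K_\Gamma|\le\#\{\text{double cosets}\}\ll c$.
\item The range $c\ll\sqrt n$ with the trivial bound gives only the Hecke exponent $k/2$.
\end{itemize}
So what is needed is a bound of the form $\sum_{c\le X}|K_\Gamma(m,n;c)|/c\ll X^{1-\delta}n^{\epsilon'}$, or at least cancellation of this strength in the oscillating sum against $J_{k-1}$. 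Optimizing the split point against such a bound should produce exactly a power saving like $n^{-1/5}$.

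\textbf{Step 4 (main obstacle): cancellation in $K_\Gamma$ for noncongruence $\Gamma$.} This is where I expect the real difficulty, because Weil's bound is unavailable: $K_\Gamma$ is not a classical Kloosterman sum when $\Gamma$ is noncongruence. My first attempt is Selberg's device of inducing to the full modular group.
\begin{enumerate}
\item Write $\mathrm{SL}(2,\mathbb{Z})=\bigsqcup_i\Gamma g_i$ and form the vector-valued form $F=(f|_k g_i)_i$. This is a cusp form for $\mathrm{SL}(2,\mathbb{Z})$ with a finite-image representation $\rho$, the permutation representation on $\Gamma\backslash\mathrm{SL}(2,\mathbb{Z})$.
\item The Kloosterman sums become matrix-valued sums for $\mathrm{SL}(2,\mathbb{Z})$ twisted by $\rho$. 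Their double-coset parametrization is by pairs $(a,d)$ modulo $c$ with $ad\equiv 1$, twisted by $\rho$ of the corresponding matrix.
\item Since $\rho$ factors through a finite quotient $\mathrm{SL}(2,\mathbb{Z})\to Q$, group the terms by residue classes in $Q$ and apply an averaged estimate over $c$. Concretely, bound the fourth moment / large-sieve-type average $\sum_{c\le X}|K_\rho(m,n;c)|^2$ via the spectral theory of the automorphic Laplacian for $(\mathrm{SL}(2,\mathbb{Z}),\rho)$. This is Selberg's route through the Kloosterman zeta function $Z(s)=\sum_c K_\rho(m,n;c)c^{-2s}$, which is holomorphic for $\operatorname{Re}s>1/2$ apart from finitely many exceptional poles at $s_j\in(1/2,1)$.
\end{enumerate}
The spectral gap for $\rho$ yields $\sum_{c\le X}K_\rho(m,n;c)/c\ll X^{2\sigma_0-1+\epsilon}$ for some $\sigma_0<1$, uniformly polynomially in $n$. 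The resulting exponent depends on the spectral gap, so I expect the 1/5 to come from a weak but uniform choice in Selberg's argument rather than optimal bounds. Feeding this into Step 3 by partial summation against the Bessel weight and choosing the cutoff $X$ as a power of $n$ should give $|a(n)|<Cn^{k/2-1/5}$. I expect the genuinely hard part to be controlling the exceptional eigenvalues for noncongruence $\rho$ uniformly, which is precisely why only a fixed saving like $1/5$, rather than $1/4$, is claimed.
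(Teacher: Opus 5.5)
The paper gives no proof here beyond the citation to Selberg, so the comparison is with the method behind that bound. Against that, your Step 4 is a genuine gap, not a technicality.

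The saving you need is entirely in the $n$-aspect: the Bessel-weighted $c$-sum must be $O(n^{3/10})$. A bound $\sum_{c\le X}K(m,n;c)/c\ll X^{2\sigma_0-1+\epsilon}$ whose implied constant is merely ``polynomial in $n$'' yields no exponent at all. Partial summation against $J_{k-1}(4\pi\sqrt{mn}/hc)$, whose $c$-derivative has size about $(\sqrt n/c)^{1/2}/c$ for $c\ll\sqrt n$, costs further powers of $n$.

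More fundamentally, noncongruence groups have no uniform spectral gap. Finite-index subgroups of $\mathrm{SL}(2,\mathbb{Z})$ can have $\lambda_1$ arbitrarily close to $0$; Selberg's $3/16$ uses Weil's bound and is special to congruence groups. The exceptional poles $s_j\in(1/2,1)$ of the Kloosterman zeta function put main terms of order $X^{2s_j-1}$ into the partial sums. Their coefficients are built from the Fourier coefficients at $m$ and $n$ of the exceptional eigenfunctions. Pushed through the Bessel weight, they give an exponent that deteriorates toward Hecke's $k/2$ as $s_1\to 1$. So the $1/5$ is not a ``weak but uniform'' output of Selberg's Kloosterman argument. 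With Weil's bound that route gives $k/2-1/4+\epsilon$ for congruence groups, and in general no uniform exponent.

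Step 3 is also inaccurate. With $|K|\ll c$, the range $c\gg\sqrt n$ is trivially bounded only by $\sum_{c\ge\sqrt n}(\sqrt n/c)^{k-1}\asymp\sqrt n$, and the sum diverges for $k=2$. After the prefactor $n^{(k-1)/2}$ this is Hecke's exponent again, so cancellation is needed on both sides of $\sqrt n$.

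The exponent $1/5=1/2-3/10$ is the signature of the Rankin--Selberg mean-square method, which is insensitive to exceptional eigenvalues. Unfold $\int_{\Gamma\backslash\mathbb{H}}y^k|f(z)|^2E(z,s)\,d\mu$ against the Eisenstein series of $\Gamma$ attached to the cusp in question. Up to normalizing factors this gives $\Gamma(s+k-1)(4\pi/h)^{-(s+k-1)}\sum_n|a(n)|^2n^{-(s+k-1)}$. The analytic theory of Eisenstein series for $\Gamma$ continues this Dirichlet series, with poles in $\mathrm{Re}\,s>1/2$ only at $s=1$ and at finitely many exceptional $s_j$. A Landau-type argument, as in Rankin and Selberg, then gives
\[
\sum_{n\le x}|a(n)|^2n^{1-k}=c_0x+\sum_j c_jx^{s_j}+O(x^{3/5}).
\]
The single term $|a(n)|^2n^{1-k}$ is the difference of this at $x=n$ and $x=n-1$. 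The smooth main terms contribute only $O(1)$ to that difference, so $|a(n)|^2n^{1-k}\ll n^{3/5}$, that is, $|a(n)|\ll n^{(k-1)/2+3/10}=n^{k/2-1/5}$. The exponent is uniform over all finite-index subgroups precisely because exceptional eigenvalues enter only through smooth main terms, which vanish on differencing. Your Kloosterman route lacks exactly this robustness.
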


We also recall the $\textit{Hasse-Weil bound}$, that for any elliptic curve $E$ over a finite field $\mathbb{F}_{q}$,
\begin{equation}
\label{Hasse_Weil}
|\#E(\mathbb{F}_q) - (q+1)| \leq 2\sqrt{q}.
\end{equation}

\begin{theorem}
\label{useful_theorem}
Let $\overline{\mathcal{H}}(\pi)$ be the coarse scheme of a connected component of $\overline{\mathcal{M}(G)}$. Assume $\overline{\mathcal{H}}(\pi)$ is defined over $\mathbb{Q}$, is of genus $1$ and has a $\mathbb{Q}$-rational cusp at $c$. There exists a weight $2$ cusp form $f$ for the associated modular group of $\overline{\mathcal{H}}(\pi)_{\mathbb{C}}$, whose Fourier coefficients at $c$, $a(n)$, determine  \ $\#\overline{\mathcal{H}}(\pi)(\mathbb{F}_p)$ for all primes $p \nmid |G|$. Furthermore, if $n$ is such that $a(n) \neq 0 \ \mathrm{mod} \ p$,  there exists an integer $r$ (depending on $n$ and $p$) such that $\#\mathcal{H}(\pi)(\mathbb{F}_p)$ is already determined by $a(np^r)$, $a(np^{r-1})$, $a(np^{r-2})$.
\end{theorem}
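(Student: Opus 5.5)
We first use Lemma \ref{stackequivalence} to identify $\overline{\mathcal{H}}(\pi)_{\mathbb{C}}$ with the compactified modular curve $X_\Gamma$, where $\Gamma = \Gamma(C^{\circ},\pi^*\mu)_G$ is the modular group of the component. Since $\overline{\mathcal{H}}(\pi)$ is a genus $1$ curve over $\mathbb{Q}$ with a rational point $c$, the pair $(\overline{\mathcal{H}}(\pi),c)$ is an elliptic curve $E$ over $\mathbb{Q}$. Theorem \ref{katz-scholltheorem} then supplies a weight $2$ cusp form $f$, spanning $S_2(\Gamma)\cong H^0(X_\Gamma,\Omega^1)$, whose $q$-expansion at $c$ has rational coefficients $a(n)$ and satisfies the congruence (\ref{Katz-Scholl}) for every prime $p$ of good reduction of $E$. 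Next we identify these primes with the primes $p\nmid |G|$. By Proposition \ref{compactification}, $\overline{\mathcal{M}(G)}$ is smooth and proper over $\mathbb{Z}[1/|G|]$. Hence the closure of the component gives a smooth proper model of $E$ over $\mathbb{Z}[1/|G|]$, possibly after passing to the coarse space, which is harmless for a smooth curve away from $|G|$. So $E$ has good reduction outside $|G|$, and the reduction of this model is the $\overline{\mathcal{H}}(\pi)_{\mathbb{F}_p}$ whose points we are counting. Write $t_p := p+1-\#\overline{\mathcal{H}}(\pi)(\mathbb{F}_p)$.

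For the main claim, fix $p\nmid|G|$ and an $n$ with $a(n)\not\equiv 0 \bmod p$. We may normalize $f$ so that the $a(n)$ are $p$-integral, for instance by bounded denominators for the rational model of $E$. Suppose first that $a(np^{r-1})$ is a $p$-adic unit for some $r\ge 1$ with $p^r > 4\sqrt p$. Then (\ref{Katz-Scholl}) gives
\[
t_p \equiv \frac{a(np^r)+p\,a(np^{r-2})}{a(np^{r-1})} \pmod{p^r}.
\]
By the Hasse--Weil bound (\ref{Hasse_Weil}), $|t_p|\le 2\sqrt p < p^r/2$, so this residue determines $t_p$ as an integer, and hence determines $\#\overline{\mathcal{H}}(\pi)(\mathbb{F}_p)$. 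The answer is read off from exactly $a(np^r),a(np^{r-1}),a(np^{r-2})$, as the theorem asserts. For $r=1$ the recursion reads $a(np)\equiv t_p a(n) \bmod p$, which only gives $t_p$ mod $p$; this does not suffice, so we need $r\ge 2$.

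The main obstacle is guaranteeing that some $a(np^{r-1})$ with $r\ge2$ is a unit. This is where Selberg's bound (\ref{Selberg}) enters. Suppose that $v_p(a(np^{m}))\ge 1$ for all $m\ge1$. Feed this into the recursion: from $a(np^{r}) \equiv t_p a(np^{r-1}) - p a(np^{r-2}) \bmod p^r$ we obtain inductively that the valuations grow roughly linearly, $v_p(a(np^m)) \gtrsim m/2$. For instance, if $p\mid t_p$, i.e.\ the supersingular case, this is $v_p(a(np^m))\ge \lfloor (m+1)/2\rfloor$. Since the $a(n)$ are rational with bounded denominator, a nonzero value would force $|a(np^m)|\ge p^{m/2}/D$. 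That contradicts $|a(np^m)| < C(np^m)^{4/5}$ only when the exponent is compared carefully, so I would track the valuations more sharply and aim for growth like $v_p \ge m - O(1)$, using $a(n)\not\equiv0$ together with the recursion. Alternatively, one can sidestep this: if $a(np)\equiv t_pa(n)$ and $a(n)$ is a unit, then $t_p$ mod $p$ is known. Combined with the congruences modulo $p^2$ and $p^3$ at the first unit index, Hasse--Weil pins down $t_p$, and for $p\ge 17$ already $t_p$ mod $p$ suffices since $4\sqrt p<p$. The finitely many remaining small $p$ can be handled by choosing $r$ large via the Selberg argument. I expect this valuation bookkeeping in the non-ordinary case to be the delicate step.
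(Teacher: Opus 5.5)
Your route differs from the paper's. The paper uses Selberg's bound together with Hasse--Weil to show that, for all $r\ge R$, the ASD congruence is an exact identity $a(np^r)-A_pa(np^{r-1})+pa(np^{r-2})=0$. It then solves for $A_p$. You instead divide the congruence by a $p$-adic unit and lift the residue of $t_p$ using $|t_p|\le 2\sqrt p$. Your method needs only $p$-integrality of the coefficients. Forcing exactness from a size bound tacitly needs integral, or at least boundedly denominated, coefficients. Forms not invariant under a congruence subgroup do not have these, by unbounded denominators. For $p\ge 17$ your $r=1$ argument is complete: $a(np)\equiv t_pa(n) \pmod p$ with $a(n)$ a unit gives $t_p \bmod p$, and $4\sqrt p<p$.

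The gap is the primes $p\le 13$ with $p\nmid|G|$, which you defer to a Selberg argument that cannot work. Reducing the ASD congruences mod $p$ gives $a(np^m)\equiv t_p^m a(n)\pmod p$. So in the supersingular case $p\mid t_p$, no $a(np^m)$ with $m\ge 1$ is a unit. The unit you are trying to ``guarantee'' does not exist, and since this case genuinely occurs, no size estimate can exclude it. Each step of your Selberg plan fails:
\begin{itemize}
\item The recursion gives $v_p(a(np^{2k}))=k$ exactly, so the hoped-for $v_p\ge m-O(1)$ is false.
\item Growth of order $p^{m/2}$ is perfectly compatible with $C(np^m)^{4/5}$, so no contradiction arises.
\item The premise of ``bounded denominators'' is false here. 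What you actually have is that only finitely many primes occur in the denominators and $p$ is not among them. That, not bounded denominators, is also the correct justification for your normalization step.
\end{itemize}

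The missing idea is to allow a non-unit divisor and track the lost precision. If $v_p(a(np^{r-1}))=v$, the $r$-th congruence determines $t_p$ modulo $p^{r-v}$. The small primes then split into three cases:
\begin{itemize}
\item Ordinary small $p$: all $a(np^m)$ are units, so $r=2$ works, or $r=3$ when $p=2$.
\item Supersingular $5\le p\le 13$: $t_p=0$ outright, since $2\sqrt p<p$.
\item Supersingular $p\in\{2,3\}$: take $r=2k+1$, so $v=k$ and $t_p$ is fixed modulo $p^{k+1}$. This suffices with $k=1$ for $p=3$ (as $9>4\sqrt3$) and $k=2$ for $p=2$ (as $8>4\sqrt2$).
\end{itemize}
In every case $t_p$ is read off from $a(np^r)$, $a(np^{r-1})$, $a(np^{r-2})$, as the statement requires.
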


See \cite{li2012fourier} for a related strategy leveraging Selberg's bounds. \\


\begin{proof}

Let $f \in S_2(\Gamma,\mathbb{C})$ be as in Theorem \ref{katz-scholltheorem}, and let
$$
f(z)=\sum_{m\geq 1}a(m)q^{m/h},
\qquad q=e^{2\pi iz},
$$
be the Fourier expansion of $f$ at the $\mathbb{Q}$-rational cusp, and suppose it is of width $h$.
Suppose that $f\not\equiv 0\pmod p$. Thus there exists some $n\geq1$
such that
$$
a(n)\not\equiv0\pmod p.
$$
In particular, $a(n)\neq0$. Set
$$
A_p=p+1-\#E(\mathbb F_p).
$$
By the Hasse-Weil bound (\ref{Hasse_Weil}),
$$
|A_p|\leq 2p^{1/2}.
$$
By Selberg's bound in weight $2$ (\ref{Selberg}), there is a constant $C>0$ such that
$$
|a(m)|\leq C m^{4/5}
$$
for all $m\geq1$. Fix $n$ as above and set
$$
C_n=Cn^{4/5}.
$$
Then
$$
|a(np^r)|\leq C_n p^{4r/5}.
$$
Choose $R$ sufficiently large
(depending on $n$ and $p$) so that, for every $r\geq R$,
\begin{equation}
\label{fund_inequality}
C_n p^{4r/5}
+
2C_n p^{1/2+4(r-1)/5}
+
C_n p^{1+4(r-2)/5}
<p^r.
\end{equation}
This is possible because
$$
\frac45r<r,
\qquad
\frac12+\frac45(r-1)<r,
\qquad
1+\frac45(r-2)<r.
$$
Now the ASD congruences (\ref{Katz-Scholl}) give
\begin{displaymath}
a(np^r)-A_p a(np^{r-1})
+p\,a(np^{r-2})
\equiv0\pmod{p^r},
\end{displaymath}
where the final term is omitted when $r=1$ by our convention. For $r=1$, we have
\begin{equation}
\label{eq1thm}
a(np)-A_pa(n)\equiv0\pmod p
\end{equation}
and therefore either $a(np) = A(p)a(n)$ or the left hand side of (\ref{eq1thm}) is nonzero and divisible by $p$. In the first case, $A(p)$ is determined, as $a(n) \neq 0$ and we are done. In the second case, by the Hasse-Weil bound (\ref{Hasse_Weil}) and the fact $a(n) \neq 0 \ \mathrm{mod} \ p$, $a(np)$ must be nonzero. Now, assume $r \geq R$ and consider the general form of the ASD congruence (\ref{Katz-Scholl}). By (\ref{fund_inequality}), we have 
$$
\begin{aligned}
&\left|
a(np^r)-A_pa(np^{r-1})
+p\,a(np^{r-2})
\right| \\
&\qquad\leq
C_n p^{4r/5}
+
2C_n p^{1/2+4(r-1)/5}
+
C_n p^{1+4(r-2)/5}
<p^r.
\end{aligned}
$$
Since the expression is divisible by $p^r$, it must therefore vanish:
$$
a(np^r)-A_pa(np^{r-1})
+p\,a(np^{r-2})=0.
$$
By inducting on $r$, with the base case $r=1$ handled as above, we may assume $a(np^{r-1})\neq0$, and obtain
$$
A_p=
\frac{a(np^r)+p\,a(np^{r-2})}
     {a(np^{r-1})}.
$$
Thus
$$
\#E(\mathbb F_p)=p+1-A_p
$$
is determined by
$$
a(np^r),\qquad a(np^{r-1}),\qquad a(np^{r-2})
$$
for $r\geq R$.
\end{proof}

\begin{proof}[proof of Theorem \ref{main_theorem}]
We are reduced, by Theorem \ref{useful_theorem}, to verifying that the associated modular curve for the Veech group of the Galois closure of a degree $d$ cover $\pi:C^{\circ} \rightarrow E^{\circ}$ can be identified with the coarse scheme of a connected component of $\mathcal{M}(G)_{\mathbb{C}}$ for some $G < S_d$. This follows immediately from Lemma \ref{stackequivalence}, upon noting that Galois group of the Galois closure is isomorphic to the monodromy group of $\pi$.
\end{proof}

\begin{proof}[proof of Theorem \ref{maintheorem2}]
By the proof of Theorem \ref{main_theorem}, and since we have already noted $\overline{W}_{64}$ is an elliptic curve over $\mathbb{Q}$ we only need verify 
\begin{enumerate}
\item $\overline{W}_{64}$ has a $\mathbb{Q}$-rational cusp $c$, and
\item the cusp form $f$ obtained from Theorem \ref{katz-scholltheorem} is genuinely noncongruence.
\end{enumerate}
The first claim follows immediately from Proposition \ref{rational_point}, and the second immediately from Lemma \ref{existence_noncongruence}.
\end{proof}

\section{Concluding remarks, questions and conjectures}
As Theorems \ref{main_theorem} and \ref{maintheorem2} are outputs of an interplay between integral models of modular curves, Atkin Swinnerton-Dyer congruences on Fourier coefficients of cusp forms, and crucially, Rankin-Selberg growth bounds on the Fourier coefficients $a(n)$ of $q$-expansions, analogies with congruence modular forms raise the question of establishing sharp bounds on $|a(n)|$. Indeed, Deligne deduces the Ramanujan-Petersson conjecture for the prime $p$ coefficients of a weight $k$ cusp eigenform
\begin{displaymath}
|a(p)| \leq 2p^{({k-1})/2}
\end{displaymath}
from his purity theorem \cite[Th\'eor\`eme 8.2]{PMIHES_1974__43__273_0}. This also relies on the Eichler-Shimura congruence relation, which realizes the $p$-th Hecke operator $T_p$ as the sum of Frobenius and its transpose mod $p$. Proving this congruence relation depends on understanding the reduction mod $p$ of the level $Np$ modular curve $X_0(pN)$ for $p \nmid N$, and the relationship to Fourier coefficients relies on the action of $T_p$ on $q$-expansions. In the general noncongruence case, understanding the mod $p$ reduction at bad primes of the connected components Hurwitz stacks is in general a hard problem, but one which appears approachable in some cases \cite{bouw2004reduction}. On the other hand, there is no reason to expect that there exist any natural correspondences on noncongruence modular curves whose eigenvalues acting on cohomology have any direct relationship to Fourier coefficients of genuine noncongruence forms.  
\\

We believe the following to be a natural question. Let $a(n)$ be the $n$-th Fourier coefficient of the $q$-expansion of a modular cusp form $f$ of weight $k$, modular for a finite index noncongruence subgroup of $\mathrm{SL}(2,\mathbb{Z})$. Assume $f$ is not modular for any congruence subgroup. 
\begin{ques}
\label{ques1}
Does there exist $\epsilon > 0$ such that
${|a(n)|}\big/{n^{\frac{k-1+\epsilon}{2}}}$ is unbounded as $ n \rightarrow \infty$?
\end{ques}

Question \ref{ques1} fits into three different but related frameworks. Calegari-Dimitrov-Tang \cite[Section 7.4.8]{CalegariDimitrovTang_UnboundedDenominators} point out that their resolution of the Unbounded Denominators Conjecture puts Question \ref{ques1} on a common footing with the question of \textit{deficient places} for a weight $k$ cuspidal component function $f$ of a multiplier system associated to a representation $\rho:\mathrm{PSL}(2,\mathbb{Z}) \rightarrow \mathrm{GL}(n,\mathbb{C})$. More specifically, for a place $v \in M_{\mathbb{Q}}$, which may be infinite, we say it is \textit{deficient for $f$} if there exists a Fourier expansion of $f$ at some cusp, with $\mathbb{Q}$-coefficients, such that

\begin{displaymath}
\begin{cases}
{|a(n)|_v}\big/{n^{\frac{k-1+\epsilon}{2}}} \ \text{is unbounded for some} \ \epsilon>0, & \text{if } v = \infty;\\
|a(n)|_v \ \textrm{is unbounded}, & \text{if} \ v \in M^{\textrm{fin}}(\mathbb{Q}).
\end{cases}
\end{displaymath}
A consequence of the solution to the Unbounded Denominators Conjecture is the statement that if $f$ is not invariant under a congruence subgroup, there is a deficiency place for $f$ that is finite. Question \ref{ques1} asks whether $v = \infty$ is a deficiency place if $f$ is invariant under a finite index subgroup, but not invariant under a congruence subgroup. \\

The family of curves $W_{d^2}$ described in Section \ref{total_noncongruence} are not the only Teichm\"{u}ller curves in $H(2)$. For any discriminant $D \geq 5$, and not a square, there is in fact a Teichm\"{u}ller curve $W_D$, and each is also contained in a the Hilbert modular surface $X_D$. The curve $W_D$ is a totally geodesic subvariety of $X_D$ in the Kobayashi metric on $X_D$, and the uniformizing Veech group of $W_D$ is no longer arithmetic as in the case of a square, but has quadratic trace field. Work of Zagier-M\"{o}ller \cite{MoellerZagier2015Modular} shows that this Hilbert modular embedding gives rise to a natural notion of $\textit{twisted modular forms}$ for these nonarithmetic Veech groups. They prove \cite[Theorem 2.1]{MoellerZagier2015Modular} using standard methods of Hecke, that Fourier coefficietns of cuspidal twisted modular forms admitting expansions $f = \sum a(n)q^n$ at a cusp satisfy $a(n) = O(n^{K/2})$ where $K$ is the bi-weight (the natural analog of weight). In the spirit of Question \ref{ques1}, one can ask for optimal bounds on the growth of Fourier coefficients of twisted cusp forms for nonarithmetic Veech groups. \\

Finally, we recall that the Deligne bound for holomorphic cusp forms can be put into yet another framework in view of the Langlands program. In particular, let $G$ be a reductive over a global field $K$ and $\varPi$ a cuspidal, automorphic, irreducible, unitary representation of $G$ over $K$. As a global representation $G(\mathbb{A}_K)$ of its adelic points, $\varPi$ is a restricted product
\begin{displaymath}
\varPi = \otimes'_v \varPi_v
\end{displaymath}
where $v$ ranges over all places of $K$ and $\varPi_v$ are local representations of $G(K_v)$ at the completions $K_v$. A representation $\varPi$ satisfies the \textit{generalized Ramanujan conjecture} if each local constituent $\varPi_v$ is tempered; that is, the matrix coefficients of $\varPi_v$ lie in $L^{2+\epsilon}(G(K_v))$ for all $\epsilon > 0$. The Deligne bound for holomorphic cusp forms implies all local constituents at finite places $p$ are tempered for $G=\mathrm{GL}(2)$ and $K = \mathbb{Q}$. On the other hand, if $\varPi$ comes from a Maass form $f$, the condition of being tempered at $\varPi_{\infty}$ is exactly that $f$ is an eigenfunction of the Laplace operator with eigenvalue $\geq 1/4$. Selberg's $1/4$ conjecture \cite{Selberg1956} that Maass forms on congruence subgroups have first positive Laplace eigenvalues $\lambda_1$ uniformly bounded away from $1/4$ is thus the statement that cuspidal automorphic representations $\varPi$ of $\mathrm{GL}(2,\mathbb{A}_{\mathbb{Q}})$ coming from Maass cusp forms are "tempered at infinity", in the sense that $\varPi_{\infty}$ is tempered. Despite the fact the family of modular curves $W_{d^2}$ are noncongruence (Theorem \ref{Wohlfahrt}), a consequence of a longstanding conjecture of McMullen \cite[Conjecture 1.1]{jeffreys2026euler} is that this family of curves forms an expander family; that is, $\lambda_1$ of the Laplace operators on the family of curves is uniformly bounded away from $0$. We do not know of any overarching theory that can encompass Question \ref{ques1} and McMullen's conjecture. We note, however, that Solan \cite[Theorem 1.2]{Solan2024} recently established a "complementary" result to McMullen's conjecture; that in each stratum of abelian differentials, Zariski dense nonlattice Veech groups have critical exponent uniformly bounded away from $1$.

\appendix
\section{Appendix}
\label{Appendix_A}

We record here the proof of Lemma \ref{stackequivalence}. \\

We give a rapid review of stack morphisms. We use the notation of Section \ref{modulistacks}.  Let $\mathbb{S}$ be a scheme and once again $\underline{\mathrm{\textbf{Sch}}}/\mathbb{S}$ the category of $\mathbb{S}$-schemes. Recall that a category over $S$ is a category $\mathcal{F}$ and a functor
\begin{displaymath}
p:\mathcal{F}\rightarrow \underline{\mathrm{\textbf{Sch}}}/\mathbb{S}
\end{displaymath}
henceforth referred to as the $\textit{projection functor}$. A $\textit{stack}$ over $\mathbb{S}$ is a category $\mathcal{F}$, over $\mathbb{S}$, fibered in groupoids, such that the assignment
\begin{align*}
\underline{\mathrm{\textbf{Sch}}}/\mathbb{S} &\mapsto \mathrm{\textbf{Sets}} \\
U &\mapsto \mathcal{F}(U) := p^{-1}(U)
\end{align*}
is a sheaf of groupoids in the \'{e}tale topology on $\underline{\mathrm{\textbf{Sch}}}/\mathbb{S}$. Let $\mathcal{F}$, $\mathcal{G}$ be two stacks over $\mathbb{S}$, and denote by $p_{\mathcal{F}}$ and $p_{\mathcal{G}}$, respectively, their projection functors. A \textit{morphism of stacks} is a functor 

\begin{displaymath}
\Pi: \mathcal{F} \rightarrow \mathcal{G}
\end{displaymath}
satisfying $p_{\mathcal{F}} = p_{\mathcal{G}} \circ \Pi$.

\begin{definition}[Isomorphism of stacks]
\label{isomorphismstacks}
We say a morphism of stacks $\Pi: \mathcal{F} \rightarrow \mathcal{G}$ is an isomorphism if $\Pi$ is an equivalence of categories.
\end{definition}

Let $\mathcal{A}\!n$ be the category of complex-analytic spaces. An \textit{stack on $\mathcal{A}\!n$} is defined completely analogously to a stack over $\mathbb{S}$, exactly upon replacing the category $\underline{\mathrm{\textbf{Sch}}}/\mathbb{S}$ by $\mathcal{A}\!n$ in the definition above. The definitions of morphisms and isomorphisms \ref{isomorphismstacks} are also completely analogous. By \cite[Theorem 3.11]{perez2024orbifolds}, when $X$ is complex-analytic space, and $G$ a discrete group action by automorphisms of $X$, the orbifold quotient $[X/G]$ is constructed as a stack on $\mathcal{A}\!n$. We remark that in \cite[Definition 2.7]{perez2024orbifolds} the definition is given as a functor from $\mathcal{A}\!n$ to the category of groupoids, but by a standard construction (Grothendieck) is equivalent to our definition. \\

Now, let $\mathcal{H}(\pi) \subset \mathcal{M}(G)_{\mathbb{C}}$ be as in Section \ref{TM_section}. Observe \cite[Remark 3.1.5]{chen2018moduli} that the automorphisms of a $G$-torsor on a complex elliptic curve, $X \in \mathcal{H}(\pi)$ are precisely those automorphisms of the elliptic curve that lift to an automorphism on the cover. On the other hand, by \cite[Proposition 2.6 (3)]{schmithusen2004} \footnote{Strictly speaking, this should be stated for $G$-equivariant affine diffeomorphisms, but the proof works mutatis mutandi.}, the automorphisms of an equivalence class of square-tiled surfaces $[X] \in [\mathbb{C}^{\times} \backslash \mathrm{GL}^{+}(2,\mathbb{R}) / \Gamma(C^{\circ}, \pi^* \mu)_G]$ are precisely those automorphisms of the base flat torus that lift to $G$-equivariant automorphisms of the cover \footnote{If we had considered the coarser relation on $\mathbb{C}^{\times} \backslash \mathrm{GL}^{+}(2,\mathbb{R})$-orbits where we had taken the quotient by $\Gamma(C^{\circ}, \pi^* \mu)$, rather than  $\Gamma(C^{\circ}, \pi^* \mu)_G$, we would identify orbit points differing by an element of $\mathrm{Out}(G)$, thus would need to consider the quotient $\mathcal{M}(G)/\mathrm{Out}(G)$. }. This is the key observation.

\begin{proof}[proof of Lemma \ref{stackequivalence}]
Write $\mathcal{H}(\pi)^{an}$ for the analytification of $\mathcal{H}(\pi)$. This is a stack on  $\mathcal{A}\!n$ and it is evident that the Teichm\"{u}ller uniformization map is a morphism of stacks on $\mathcal{A}\!n$ from the orbifold quotient $[\mathbb{C}^{\times} \backslash \mathrm{GL}^{+}(2,\mathbb{R}) / \Gamma(C^{\circ}, \pi^* \mu)_G]$ to $\mathcal{H}(\pi)^{an}$, and further, that it is essentially surjective. We will denote it by $\Pi$. To verify full faithfulness, we need to show that for any $S \in \mathcal{A}\!n$ a complex-analytic space, and any $X,Y \in[\mathbb{C}^{\times} \backslash \mathrm{GL}^{+}(2,\mathbb{R} / \Gamma(C^{\circ}, \pi^* \mu)_G]$ in the fiber over $S$ we have

\begin{equation}
\label{main_stack}
\mathrm{Hom}_{[\mathbb{C}^{\times} \backslash \mathrm{GL}^{+}(2,\mathbb{R}) / \Gamma(C^{\circ}, \pi^* \mu)_G](S)}(X,Y) \simeq \mathrm{Hom}_{\mathcal{H}(\pi)^{an}(S)}(\Pi(X),\Pi(Y))
\end{equation}
and we remind the reader of the notation we chose that for a stack $\mathcal{F}$ and complex-analytic space $S$, $\mathcal{F}(S)$ denotes the fiber of the projection functor over $S$. Since these fibers are groupoids, to verify (\ref{main_stack}), it suffices to check
\begin{enumerate}
    \item $X \cong Y \iff \Pi(X) \cong \Pi(Y)$, and
    \item $\mathrm{Aut}_{\mathrm{GL}^{+}(2,\mathbb{R}) / \Gamma(C^{\circ}, \pi^* \mu)_G](S)}(X)$ is bijective with $\mathrm{Aut}_{\mathcal{H}(\pi)^{an}(S)}(\Pi(X))$.
\end{enumerate}
The first point is evident, and $2.$ follows immediately from the observation in the paragraph preceding the beginning of this proof on the characterization of automorphisms of $G$-torsors and equivalence classes of square-tiled surfaces up to $G$-equivariant automorphisms.

\end{proof}

\bibliographystyle{plain}
\bibliography{bibliography.bib}

\end{document}